\crefname{equation}{}{}
\crefname{assumption}{Assumption}{Assumptions}
\newtheorem{theorem}{Theorem}[section]
\newtheorem{lemma}[theorem]{Lemma}
\theoremstyle{definition}
\theoremstyle{remark}
\newtheorem{remark}[theorem]{Remark}
\numberwithin{theorem}{section}
\numberwithin{equation}{section}
\numberwithin{figure}{section}
\def\TH{\mathcal T_H}
\def\Th{\mathcal T_h}
\def\supp{\operatorname{supp}}
\def\with{\,:\,}
\def\dx{\,\mathrm{d}x}
\def\tint{\begingroup\textstyle \int\endgroup}
\def\Nb{\mathsf{N}}
\def\calV{\mathcal{V}}
\def\calP{\mathfrak{P}}
\DeclareMathOperator*{\argmin}{arg\,min}
\numberwithin{equation}{section}
\numberwithin{theorem}{section}
	\def\MR#1{}
\title[A Generalized Higher-Order LOD Framework]{A Generalized Framework for Higher-Order Localized Orthogonal Decomposition Methods}
\author[M.~Hauck, A.~Lozinski, R.~Maier]{Moritz Hauck$^*$, Alexei Lozinski$^\dagger$, Roland~Maier$^*$}
\address{${}^*$ Institute for Applied and Numerical Mathematics, Karlsruhe Institute of Technology, Englerstr.~2, 76131 Karlsruhe, Germany}
\email{\{moritz.hauck,roland.maier\}@kit.edu}
\address{${}^{\dagger}$ Université Marie et Louis Pasteur, CNRS, LmB (UMR 6623), F-25000 Besançon, France}
\email{alexei.lozinski@univ-fcomte.fr}
\begin{document}

\begin{abstract}
We introduce a generalized framework for studying higher-order versions of the multiscale method known as Localized Orthogonal Decomposition. Through a suitable reformulation, we are able to accommodate both conforming and nonconforming constraints in the construction process. In particular, we offer a new perspective on localization strategies. We fully analyze the strategy for linear elliptic problems and discuss extensions to the Helmholtz equation and the Gross--Pitaevskii eigenvalue problem. Numerical examples are presented that particularly provide valuable comparisons between conforming and nonconforming constraints.
\end{abstract}

\keywords{multiscale method, localized orthogonal decomposition, higher-order, a~priori error analysis, exponential decay}

\subjclass{
	65N12, 
	65N15, 
	65N30} 

\maketitle

\section{Introduction}

The numerical solution of partial differential equations (PDEs) with strongly heterogeneous and highly oscillatory coefficients is challenging. Classical finite element methods often fail to produce accurate results unless the fine-scale variations of the coefficients are fully resolved. 
However, globally resolving such fine-scale features can be prohibitively expensive. The associated computational cost and memory requirements often exceed the capabilities of available computing resources.
To address this, numerous multiscale methods have been developed, particularly in the elliptic setting. These include the Heterogeneous Multiscale Method~\cite{EE03,EE05,AbdEEV12}, (Generalized) Multiscale Finite Element Methods~\cite{BabO83,BabCO94,HowW97,BabL11,EfeGH13}, Multiscale Spectral Generalized Finite Element Methods~\cite{BabL11,MaSD22}, rough polyharmonic splines~\cite{OwhZB14}, the Localized Orthogonal Decomposition (LOD)~\cite{MalP14,HenP13}, and gamblets~\cite{Owh17}. More recently, refined localization strategies within the LOD framework have been proposed; see, for instance, \cite{HauP23,FreHKP24}. Comprehensive overviews of numerical multiscale methods can be found in the textbooks~\cite{OwhS19,MalP20} and the review article~\cite{AltHP21}.

While the techniques discussed above typically yield first-order convergence behavior, also higher-order multiscale methods have been developed. In the context of the Heterogeneous Multiscale Method, such extensions have been proposed in~\cite{LiPT12,AbdB12}. For the Multiscale Finite Element Method, higher-order convergence rates are achieved in~\cite{AllB05,HesZZ14}. Hybrid multiscale methods, which reduce global degrees of freedom to element boundaries, have also become quite popular to achieve higher-order rates; see, e.g.,~\cite{HarPV13,AraHPV13,CicEL19}. However, all these higher-order approaches require certain smoothness assumption on the domain, the coefficient, and/or the exact solution to obtain convergence rates beyond first order. For rough coefficients, as they arise in many applications (e.g., in the context of composite materials, that do not have a smooth transition between materials properties), such conditions are typically not fulfilled and coefficients are only in~$L^\infty$. Deriving higher-order rates in such a setting is not straightforward and requires the tailored design of approximation spaces and the use of suitable orthogonality properties. This is achieved in~\cite{Maier2021} (see also~\cite{Mai20}) with a higher-order multiscale method based on ideas of the LOD and gamblets. The approach uses higher-order non-conforming spaces as constraints in the construction of an ideal multiscale space.  It can also be reformulated within the framework of the original LOD method, as demonstrated in~\cite{Dong2023}, resulting in an improved localization strategy. The use of such constraints (also referred to as \emph{quantities of interest}) in the construction of problem-adapted approximation spaces is not new; similar ideas have been employed in the context of gamblets in~\cite{OwhS19}.

The aim of this work is two-fold. First, we reformulate the higher-order method in~\cite{Maier2021,Dong2023} within a general and unified framework, offering a new perspective on localization strategies. A key advantage of this framework is that it eliminates the need to construct quasi-interpolation operators tailored to a given set of constraints, as required in~\cite{Dong2023} to achieve improved localization. This task can be technically challenging, particularly for higher-order constraints, and often relies on intricate bubble function constructions. Moreover, the framework naturally accommodates the use of both conforming and nonconforming higher-order finite element spaces as constraints.
Second, we compare different variants of higher-order LOD-type methods and illustrate how the underlying principles can be extended to a wider class of PDEs, including heterogeneous Helmholtz problems and the Gross–Pitaevskii eigenvalue problem.

The remaining parts of the paper are organized as follows. In \cref{sec:modelproblem}, we introduce the elliptic model problem for which we define a prototypical multiscale method in \cref{sec:proto} that is able to achieve higher-order convergence rates under minimal structural assumptions. We then establish the exponential decay of the basis functions in \cref{sec:decay} and introduce a practical multiscale method using localized versions of the basis functions in \cref{sec:localization}. Finally, we discuss the applicability of the approach to the heterogeneous Helmholtz equation and the Gross--Pitaevskii eigenvalue problem in \cref{sec:general}, and present numerical examples in \cref{sec:numerics}.

\subsection*{Notation} 
Throughout this work, we will write $a \lesssim b$ or $b\gtrsim a$ if it holds that $a \leq C b$ or $a \geq C b$, respectively, where $C>0$ is a constant that is independent of the mesh size $H$, the oversampling parameter $\ell$, and oscillations of the PDE solution~$u$, but can depend on the dimension $d$, the bounds $\alpha$ and $\beta$ on the coefficient, and the polynomial degree $p$. In particular, we do not explicitly track dependencies on~$p$, as we are not considering an asymptotic behavior with respect to $p$.

\section{Model problem}
\label{sec:modelproblem}

We consider the prototypical second-order elliptic PDE \( -\mathrm{div} (A \nabla u) = f \) in weak form, with homogeneous Dirichlet boundary conditions, posed on a polygonal Lipschitz domain \( \Omega \subset \mathbb{R}^d \), where \( d \in \{2, 3\} \). The matrix-valued coefficient \( A \in L^\infty(\Omega, \mathbb{R}^{d \times d}) \) is symmetric and positive definite, satisfying for almost all $x \in \Omega$:
\begin{equation}\label{eq:propA}
	\alpha |\eta|^2 \leq (A(x)\eta) \cdot \eta \leq \beta |\eta|^2,\quad \forall \eta \in \mathbb R^d
\end{equation}
with some  constants \( 0 < \alpha \leq \beta < \infty \), where \( |\cdot| \) denotes the Euclidean norm in~\( \mathbb{R}^d \). It is important to note that no regularity assumptions are imposed on the coefficient~\( A \). The scenario of particular interest here is when the coefficient~\( A \) is rough, with oscillations across multiple length scales.

The solution space of the considered PDE is the Sobolev space \( V \coloneqq H^1_0(\Omega) \), and the associated bilinear form \( a \colon V \times V \to \mathbb{R} \) is given by
\begin{equation*}
a(u, v) \coloneqq \int_\Omega (A \nabla u) \cdot \nabla v \, \dx.
\end{equation*}
The symmetry of $A$ and the ellipticity condition \cref{eq:propA} ensure that the bilinear form~$a$ defines an inner product on the space $V$. The induced norm, referred to as the energy norm, is defined by \( \| \cdot \|_a^2 \coloneqq a(\cdot, \cdot) \), and it is equivalent to the standard \( H^1(\Omega) \)-norm on \( V \).
Given a source term \( f \in L^2(\Omega) \), the weak formulation of the problem seeks a solution \( u \in V \) such that
\begin{equation}
	\label{eq:weakform}
	a(u, v) = (f, v)_\Omega,\quad \forall v \in V,
\end{equation}
where \( (\cdot, \cdot)_\Omega \) denotes the \( L^2(\Omega) \)-inner product.
Denoting by \( \| \cdot \|_\Omega^2 \coloneqq (\cdot, \cdot)_\Omega \) the \( L^2(\Omega) \)-norm, the Riesz representation theorem ensures the existence and uniqueness of the solution \( u \in V \), along with the stability estimate
\(
	\| \nabla u \|_{\Omega} \leq \alpha^{-1} C_{\mathrm{F}} \| f \|_\Omega,
\)
where \( C_{\mathrm{F}} > 0 \) is the constant in the Friedrichs inequality on the domain \( \Omega \).

\section{Prototypical multiscale method}\label{sec:proto}

As mentioned above, classical finite element methods often perform poorly for multiscale problems: they require resolving all microscopic details of the coefficient globally, which can be prohibitively expensive, and they typically suffer from slow convergence due to the low regularity of the underlying multiscale solution.
In this section, we introduce a prototypical multiscale method that, under minimal structural assumptions on the coefficient~$A$, yields accurate (optimal-order) approximations even on very coarse meshes.
Following the standard construction of the LOD (cf.~\cite{MalP14,MalP20}), we begin by introducing a subspace of \( V \) consisting of fine-scale functions with oscillations on length scales smaller than a prescribed parameter \( H \), characterized by certain vanishing (possibly higher-order) weighted averages. 
To formalize this, we introduce a hierarchy of quasi-uniform, shape-regular, and geometrically conforming meshes \( \{ \mathcal{T}_H \}_H \), where each mesh \( \mathcal{T}_H \) is a finite subdivision of the closure of \( \Omega \) into closed simplicial\footnote{\label{page:footnote}This restriction to simplicial elements is not essential and is made for simplicity of presentation. In fact, quadrilateral/hexahedral elements may also be considered. The corresponding globally continuous finite element spaces can be constructed using polynomials on a reference element, which are then mapped to the mesh elements via multi-linear coordinate transformations.
} elements \( T \). 
The mesh size parameter \( H > 0 \) is defined as the maximum diameter of the elements in the mesh \( \mathcal{T}_H \), i.e., \( H \coloneqq \max_{T \in \mathcal{T}_H} \operatorname{diam}(T) \). In what follows, we are primarily interested in the under-resolved regime where meshes are too coarse to capture the microscopic details of the coefficient.
Given an arbitrary but fixed polynomial degree \( p \in \mathbb{N} \), we denote by \( M_H \) the $p$-th order finite element space with respect to $\TH$, associated with either the standard discontinuous Galerkin (DG) method or its continuous Galerkin (CG) counterpart. Specifically, in the DG case, \( M_H \) is given by \( \mathcal{P}^p(\mathcal{T}_H) \), the space of \( \mathcal{T}_H \)-piecewise polynomials of total degree at most \( p \), while in the CG case, the space is defined as \( \mathcal{P}^p(\mathcal{T}_H) \cap H^1(\Omega) \).
We define the fine-scale space of the LOD as
\begin{equation}
	\label{eq:defW}
	W \coloneqq \{w \in V \with (\mu,w)_\Omega=0,\;  \forall \mu\in M_H \}.
\end{equation}
The problem-adapted approximation space of the LOD is then given by the orthogonal complement of \( W \) with respect to the energy inner product \( a \), i.e., 
\begin{equation}
	\label{eq:Zms}
	\tilde{V}_H \coloneqq \big\{ v \in V \with a(v, w) = 0, \; \forall w \in W \big\}.
\end{equation}
Since \( W \) has finite codimension in \( V \), the space \( \tilde{V}_H \) is finite-dimensional.
The use of tildes in the notation highlights that these spaces are specifically tailored to the problem at hand. 
The prototypical LOD method is defined as the Galerkin projection onto \( \tilde{V}_H \), i.e., it seeks \( \tilde{u}_H \in \tilde{V}_H \) such that 
\begin{align}\label{eq:protmethod}
	a(\tilde{u}_H, \tilde{v}_H) = (f, \tilde{v}_H)_\Omega,\quad \forall \tilde{v}_H \in \tilde{V}_H.
\end{align}
Recall that \( M_H \) can be either a CG or a DG space, which leads to two distinct definitions of \( \tilde{V}_H \) and, consequently, two different (prototypical) LOD methods. We refer to these as the prototypical CG-LOD and DG-LOD methods, respectively.
Before stating a convergence result for these methods, we introduce some notation for broken Sobolev spaces. For \( s \in \mathbb{N} \), we denote by \( H^s(\TH) \) the space of \( \TH \)-piecewise \( H^s \)-functions, with the corresponding broken seminorm $|f|_{s,\TH}^2 \coloneqq  \sum_{T \in \TH} |f|_{s,T}^2$, where \( |\cdot|_{s,T} \) denotes the seminorm of order $s$ of a function in $H^s(T)$. 
\begin{theorem}[Errors of the prototypical method]\label{thm:convergenceprot}
The prototypical method \cref{eq:protmethod} is well-posed and its solution is given by \( \tilde{u}_H = \mathcal{R} u \), where \( \mathcal{R} \colon V \to \tilde{V}_H \) denotes the \( a \)-orthogonal projection onto \( \tilde{V}_H \). Moreover, for any \( f \in H^{s}(\Omega) \) for the CG-LOD, and \( f \in H^s(\mathcal{T}_H) \) for the DG-LOD, with \( s \in \{0, 1, \dots, p+1\} \), we have 
\begin{equation}
	\|u - \tilde u_H\|_\Omega \lesssim H^{s+2} |f|_{s,\TH}, \qquad\quad
	\|\nabla (u - \tilde u_H)\|_\Omega \lesssim H^{s+1} |f|_{s,\TH}\label{eq:errestL2H1}.
\end{equation}
\end{theorem}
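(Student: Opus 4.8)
The plan is to follow the standard blueprint for LOD-type analyses, with the higher-order rates produced by the polynomial approximation power of $M_H$. First, for well-posedness and the identification $\tilde u_H = \mathcal R u$: the space $W$ is a closed subspace of the Hilbert space $(V,a(\cdot,\cdot))$, being the intersection over the finite-dimensional space $M_H$ of the kernels of the $a$-continuous linear functionals $w \mapsto (\mu,w)_\Omega$. Hence $(V,a)$ splits $a$-orthogonally as $V = \tilde V_H \oplus W$ with $\tilde V_H = W^{\perp_a}$ finite-dimensional and, conversely, $W = \tilde V_H^{\perp_a}$. Since $a$ is an inner product on $V$, it is in particular bounded and coercive on $\tilde V_H$, so \cref{eq:protmethod} is well-posed (e.g., by the Lax--Milgram theorem). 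Writing $u = \mathcal R u + (u - \mathcal R u)$ with $u - \mathcal R u \in W$ and testing \cref{eq:weakform} with $\tilde v_H \in \tilde V_H$ gives $a(\mathcal R u, \tilde v_H) = a(u, \tilde v_H) = (f,\tilde v_H)_\Omega$, so $\mathcal R u$ solves \cref{eq:protmethod} and, by uniqueness, $\tilde u_H = \mathcal R u$. In particular, the error $e \coloneqq u - \tilde u_H = u - \mathcal R u$ lies in $W$.

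The crucial auxiliary ingredient is a Poincar\'e-type bound $\|w\|_\Omega \lesssim H\,\|\nabla w\|_\Omega$ for every $w \in W$. In the DG case this is immediate: testing with the indicator function of an element $T \in \TH$ (which lies in $\mathcal{P}^p(\TH)$) shows that $w$ has vanishing mean on $T$, so the elementwise Poincar\'e inequality together with shape-regularity yields the claim after summing over $T$. In the CG case one uses instead a quasi-interpolation operator $I_H \colon V \to M_H$ of Scott--Zhang type with $\|v - I_H v\|_\Omega \lesssim H\,\|\nabla v\|_\Omega$; since $I_H w \in M_H$ and $w \in W$, we get $\|w\|_\Omega^2 = (w - I_H w, w)_\Omega \le \|w - I_H w\|_\Omega\,\|w\|_\Omega$, hence the same bound. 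Combined with \cref{eq:propA}, this gives $\|w\|_\Omega \lesssim H\,\|w\|_a$ for all $w \in W$.

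Denoting by $\Pi_H \colon L^2(\Omega) \to M_H$ the $L^2$-orthogonal projection, I next establish the abstract energy estimate: if $\phi \in V$ solves $a(\phi,v) = (g,v)_\Omega$ for all $v \in V$ with some $g \in L^2(\Omega)$, then $\|\phi - \mathcal R\phi\|_a \lesssim H\,\|g - \Pi_H g\|_\Omega$. Indeed, $\phi - \mathcal R\phi \in W$, and using $a$-orthogonality of $\mathcal R\phi$ to $W$, the defining property of $W$, Cauchy--Schwarz, and the Poincar\'e-type bound,
\[
\|\phi - \mathcal R\phi\|_a^2 = a(\phi, \phi - \mathcal R\phi) = (g - \Pi_H g, \phi - \mathcal R\phi)_\Omega \le \|g - \Pi_H g\|_\Omega\,\|\phi - \mathcal R\phi\|_\Omega \lesssim H\,\|g - \Pi_H g\|_\Omega\,\|\phi - \mathcal R\phi\|_a.
\]
Taking $g = f$ and invoking the approximation property of $\Pi_H$ on degree-$p$ polynomials, $\|f - \Pi_H f\|_\Omega \lesssim H^s\,|f|_{s,\TH}$ for $0 \le s \le p+1$ (elementwise Bramble--Hilbert in the DG case, via Scott--Zhang in the CG case), gives $\|e\|_a \lesssim H^{s+1}|f|_{s,\TH}$, which together with $\|\nabla e\|_\Omega \lesssim \|e\|_a$ is the asserted $H^1$-estimate. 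For the $L^2$-estimate I run an Aubin--Nitsche duality argument: let $z \in V$ solve $a(v,z) = (e,v)_\Omega$ for all $v \in V$, which by symmetry of $a$ is again the model problem with right-hand side $e$. Since $e \in W$ and $\mathcal R z \in \tilde V_H = W^{\perp_a}$,
\[
\|e\|_\Omega^2 = a(e,z) = a(e, z - \mathcal R z) \le \|e\|_a\,\|z - \mathcal R z\|_a \lesssim H\,\|e\|_a\,\|e - \Pi_H e\|_\Omega \le H\,\|e\|_a\,\|e\|_\Omega,
\]
where the abstract energy estimate was used with $g = e$. Dividing by $\|e\|_\Omega$ yields $\|e\|_\Omega \lesssim H\,\|e\|_a \lesssim H^{s+2}|f|_{s,\TH}$, completing the proof.

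I expect the only genuinely delicate point to be the Poincar\'e-type inequality in the CG case, where one must invoke a quasi-interpolation operator with range exactly in $M_H$ and an $H$-explicit $L^2$-approximation bound whose constant depends only on shape-regularity and $p$; everything else amounts to bookkeeping with the two orthogonality relations $\tilde V_H = W^{\perp_a}$, $W = \tilde V_H^{\perp_a}$, and the classical approximation estimates for the $L^2$-projection onto $\mathcal{P}^p(\TH)$.
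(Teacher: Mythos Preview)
Your proof is correct and follows essentially the same approach as the paper, which defers the error estimates to~\cite[Thm.~3.1]{Maier2021} for the DG case and notes that the CG case is analogous with the $L^2$-projection onto continuous piecewise polynomials; you have simply spelled out that argument in full (Poincar\'e-type bound on $W$, abstract energy estimate via the double orthogonality, and Aubin--Nitsche for the $L^2$-estimate). One minor simplification for your CG Poincar\'e step: since $w\in W$ is $L^2$-orthogonal to $M_H$, you already have $\Pi_H w=0$, so $\|w\|_\Omega=\|w-\Pi_H w\|_\Omega\lesssim H\|\nabla w\|_\Omega$ follows directly from the approximation property of $\Pi_H$ that you invoke later anyway, avoiding the separate Scott--Zhang construction.
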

\begin{proof}
First, note that the space \( \tilde V_H \) is a closed subspace of \( V \), and hence, problem~\cref{eq:Zms} is well-posed by the Riesz representation theorem. Comparing \eqref{eq:Zms} with the weak formulation \eqref{eq:weakform}, we observe that \( a(u - \tilde u_H, \tilde v_H) = 0 \) for all \( \tilde v_H \in \tilde V_H \), which implies that \( \tilde u_H = \mathcal R u \), where $\mathcal R\colon V \to \tilde V_H$ is the \( a \)-orthogonal projection.

Since the prototypical DG-LOD is equivalent to the prototypical method presented in~\cite{Maier2021}, the error estimates in~\eqref{eq:errestL2H1} can be directly concluded from \cite[Thm.~3.1]{Maier2021}. 
The corresponding proof for the prototypical CG-LOD follows analogously, replacing the \(L^2\)-projection onto $\TH$-piecewise polynomials in the proof of~\cite[Thm.~3.1]{Maier2021} with the \(L^2\)-projection onto globally continuous \(\mathcal{T}_H\)-piecewise polynomials, noting that both share the same approximation properties.
\end{proof}

\begin{remark}[Boundary conditions of $M_H$]\label{remBCorder} 
In the CG-LOD variant, not enforcing boundary conditions for $M_H$ is crucial to achieve higher-order convergence rates as stated in \cref{thm:convergenceprot}. For example, using the polynomial degree $p = 1$, \cref{thm:convergenceprot} gives an $H^1$-error of order $\mathcal{O}(H^3)$. In contrast, the more traditional approaches in~\cite{MalP14, HenP13, MalP20}, where the homogeneous Dirichlet boundary conditions are imposed on the analogue of $M_H$, yield an error which is only of order $\mathcal{O}(H)$.
If $f \in H^1_0(\Omega)$, convergence rates beyond first order can also be achieved using $H^1_0(\Omega)$-conforming spaces~$M_H$; see, e.g.,~\cite[Lem.~8.1]{MalP20}. This has been exploited, for instance, for the Gross--Pitaevskii problem in~\cite{HenP23}.
\end{remark}

To better understand the structure of the space \( \tilde{V}_H \), we characterize the \( a \)-orthogonal projection \( \mathcal{R} \colon V \to \tilde{V}_H \) via a saddle point formulation using Lagrange multipliers in \( M_H \). Specifically, for any \( v \in V \), the projection \( \mathcal{R}v \in V \) and the associated Lagrange multiplier \( \lambda \in M_H \) are defined as the unique solution pair to
\begin{subequations}
	\label{eq:defR} 
	\begin{align}
		&\qquad \qquad  \qquad a (\mathcal R v, w)& +&  &b(w,\lambda) & &=\quad  &0,&&\forall w \in V, \quad &&\label{eq:defR1}\\
		&\qquad \qquad  \qquad b(\mathcal R v,\mu)                   &   &         &    & &=\quad &b(v,\mu),&&\forall \mu \in M_H, \quad\label{eq:defR2}&&
	\end{align}
\end{subequations}
where the bilinear form \( b \) is given by
\(
b(v, \mu) \coloneqq (v, \mu)_\Omega.
\)
Indeed, equation~\cref{eq:defR1} ensures that \( \mathcal{R}v \in \tilde{V}_H \), while equation \cref{eq:defR2} implies that \( \mathcal{R}v - v \in W \). The well-posedness of \eqref{eq:defR} then follows from classical saddle point theory (cf.~\cite[Cor.~4.2.1]{BofBF13}) and, in particular, relies on the inf--sup condition
\begin{equation}  \label{infsupbglob} 
  \adjustlimits\inf_{\mu \in M_H} \sup_{v \in V} \frac{b (v, \mu)}{\|
  \nabla v \|_{\Omega} \| \mu \|_{\Omega}} \gtrsim  H > 0\,,
\end{equation}
which is a direct consequence of~\cite[Lem.~3.4]{Maier2021}. 

The operator \( \mathcal{R} \) and its representation given in \cref{eq:defR}, enables the construction of a basis for the space \( \tilde{V}_H = \mathcal{R}V \). Before formalizing this idea, we briefly review two existing LOD constructions from the literature:

\begin{enumerate}
\item\label{itemCorrVH} 
In~\cite{MalP14, MalP20}, the fine-scale space \( W \) is defined as the kernel of a quasi-interpolation operator \( \mathcal{I}_H \colon V \to V_H \), where \( V_H \) is an underlying \( H^1_0(\Omega) \)-conforming finite element space. A basis for \( \tilde{V}_H \) is obtained by correcting the basis functions of \( V_H \) with corrections in~\( W \).
Specifically, the prototypical LOD space~\( \tilde{V}_H \) is redefined as \( \tilde{V}_H = V_H - \mathcal{C}V_H \) with the $a$-orthogonal projection $\mathcal C \colon V \to W$ serving as a correction operator. 
This strategy is beneficial for localizing the space~\( \tilde{V}_H \) by expressing $\mathcal{C}$ as the sum of contributions from every element  of the mesh. However, it is not applicable in the present work, as our underlying finite element spaces \( M_H \) are not \( H^1_0(\Omega) \)-conforming, which is essential to obtain higher-order convergence, as discussed in Remark \ref{remBCorder}. 
\item 
In~\cite{AltHP21}, a more general framework is introduced in which the basis functions of the prototypical LOD space are associated with so-called quantities of interest (QOI), a concept that also plays an important role in the theory of gamblets; see, e.g.,~\cite{OwhS19}.
Such QOI are continuous functionals on \( V \) encoding information about  the exact solution \( u \) to problem~\cref{eq:weakform} that the method aims to preserve exactly. The fine-scale space \( W \) is then defined as the intersection of the kernels of all QOI. We adopt this general approach in the present work, choosing the QOI as the \( L^2(\Omega) \)-inner products with basis functions of \( M_H \); see~\cref{defQOI} below. 
Our DG-LOD corresponds to the choice of QOI  in~\cite[Ex.~3.1 (a) \& (b)]{AltHP21}.
The CG-LOD with \( p = 1 \) is closely related to~\cite[Ex.~3.1~(d)]{AltHP21} except for the treatment of boundary conditions of the space $M_H$, cf. \cref{remBCorder}.
Further note that the variant from \cite[Ex.~3.1~(c)]{AltHP21}  is equivalent to  the classical LOD as discussed in \cite{MalP20}. 
Modifying the boundary treatment, as in \cref{remBCorder} for the CG-LOD, yields a version of this method with an error estimate improved by one order in \( H \) compared to~\cite{MalP20}. It fits into the framework of this article by choosing \( M_H \) as a suitable subspace of \( \mathcal{P}^1(\mathcal{T}_H) \).
\end{enumerate}

In our setting, the basis functions of \( \tilde{V}_H \) are associated with QOI defined as
\begin{equation}\label{defQOI}
	q_j(v) \coloneqq \int_{\omega_j} v \Lambda_j \, \mathrm{d}x,
	\quad j = 1, \dots, J,
\end{equation}
where \( \Lambda_j \) are the basis functions of \( M_H \), \( J \coloneqq \dim(M_H) \), and \( \omega_j = \supp(\Lambda_j) \). 
In case of the CG-LOD, we choose \( \{\Lambda_j\}_{j = 1}^J\) as the $p$-th order Lagrange nodal basis\footnote{For practical reasons, we replace vertex-based functions with piecewise affine nodal functions. This modification is not essential to the construction but facilitates a simplified practical computation of the resulting basis functions in \cref{sec:locmethod}.
} of~\( M_H \). For the DG-LOD, the basis functions are taken as the \( L^2(T) \)-orthonormal Legendre bases of \( \mathcal{P}^p(T) \) on each element \( T \in \mathcal{T}_H \).
Henceforth, we denote by \( \{ \mu_j \}_{j=1}^J \) the coefficients of a function \( \mu \in M_H \) with respect to the basis \( \{ \Lambda_j \}_{j=1}^J \), i.e., 
\begin{equation}
	\label{eq:coordinates}
	\mu = \sum_{j=1}^J \mu_j \Lambda_j.
\end{equation}
The following lemma characterizes the basis functions of \( \tilde{V}_H \) as solutions to saddle point problems with a Kronecker-delta constraint on the QOI. 

\begin{lemma}[Prototypical basis]\label{le:protbasis}
	A basis of the space $\tilde V_H$ is given by $\big\{\tilde \varphi_j\with j =1,\dots,J \big\}$
	with $\tilde \varphi_j$ defined for each $j \in \{1,\dots,J\}$ as the unique solution to the saddle point problem that seeks $(\tilde \varphi_j,\lambda_j) \in V\times M_H$ such that
	\begin{subequations}
		\label{pbphiE} 
		\begin{align}
			&\qquad \qquad  \qquad a (\tilde \varphi_j, v)& +&  &b(v,\lambda) & &=\quad  &0, &&\forall v \in V,\qquad \qquad\quad &&\label{eq:LODbasis1}\\
			&\qquad \qquad \qquad b(\tilde \varphi_j,\mu)                   &   &         &    & &=\quad  &\mu_{j},&&\forall \mu \in M_H,\qquad \qquad\quad\label{eq:LODbasis3}&&
		\end{align}
	\end{subequations}
where $\mu_j$ is the coordinate of $\Lambda_j$ in the basis representation of $\mu$, cf.~\cref{eq:coordinates}.
Furthermore, the projection operator $\mathcal R$ can be represented as
    \begin{equation}
	\label{Rsumbas}
	\mathcal R v = \sum_{j = 1}^J q_j(v)\tilde \varphi_j.
\end{equation}
\end{lemma}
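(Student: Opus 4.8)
The plan is to establish two things: that the saddle point problem \cref{pbphiE} is well-posed with solution $\tilde\varphi_j \in \tilde V_H$, and that the collection $\{\tilde\varphi_j\}_{j=1}^J$ spans $\tilde V_H$ and is linearly independent, together with the representation formula \cref{Rsumbas}. The natural starting point is to recognize \cref{pbphiE} as a special case of the projection problem \cref{eq:defR}: the right-hand side $\mu \mapsto \mu_j$ appearing in \cref{eq:LODbasis3} is precisely $b(v,\mu)$ evaluated at any $v \in V$ with $b(v,\mu) = \mu_j$ for all $\mu \in M_H$, i.e. at any $v$ whose QOI vector is the $j$-th unit vector. Concretely, since the functionals $q_1,\dots,q_J$ are linearly independent (the $\Lambda_j$ are a basis of $M_H$ and the inf--sup condition \cref{infsupbglob} guarantees $b$ is nondegenerate on $V \times M_H$), there exists $v_j \in V$ with $q_i(v_j) = \delta_{ij}$; then \cref{pbphiE} is exactly \cref{eq:defR} with data $v = v_j$, so $\tilde\varphi_j = \mathcal R v_j$. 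Well-posedness of \cref{pbphiE} thus follows verbatim from the well-posedness of \cref{eq:defR}, which itself rests on the inf--sup condition \cref{infsupbglob} and classical saddle point theory as already invoked in the excerpt.

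Next I would prove \cref{Rsumbas}. Take any $v \in V$. By \cref{eq:defR2}, $\mathcal R v - v \in W$, so $q_j(\mathcal R v) = q_j(v)$ for all $j$, i.e. $b(\mathcal R v, \mu) = \sum_j q_j(v)\,\mu_j$ for all $\mu \in M_H$. On the other hand, summing \cref{pbphiE} against the coefficients $q_j(v)$, the function $\sum_j q_j(v)\,\tilde\varphi_j$ lies in $\tilde V_H$ (each $\tilde\varphi_j$ does, by \cref{eq:LODbasis1}) and satisfies $b\big(\sum_j q_j(v)\tilde\varphi_j,\mu\big) = \sum_j q_j(v)\,\mu_j$ for all $\mu \in M_H$. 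Hence $\mathcal R v - \sum_j q_j(v)\tilde\varphi_j$ lies in $\tilde V_H$ and is $b$-orthogonal to all of $M_H$, i.e. it lies in $\tilde V_H \cap W$. But $\tilde V_H \cap W = \{0\}$, since any element of this intersection is $a$-orthogonal to itself; this gives \cref{Rsumbas}.

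Finally, \cref{Rsumbas} immediately yields that $\{\tilde\varphi_j\}$ spans $\tilde V_H = \mathcal R V$. Linear independence follows from the Kronecker-delta property: applying $q_i$ to $\sum_j c_j \tilde\varphi_j = 0$ and using $q_i(\tilde\varphi_j) = b(\tilde\varphi_j,\Lambda_i) = (\Lambda_i)_j = \delta_{ij}$ (this is \cref{eq:LODbasis3} tested with $\mu = \Lambda_i$) gives $c_i = 0$ for every $i$. Since $\dim \tilde V_H \le J$ as well (the map $v \mapsto (q_j(v))_j$ is injective on $\tilde V_H$ because $\tilde V_H \cap W = \{0\}$), we conclude $\{\tilde\varphi_j\}_{j=1}^J$ is a basis. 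The main obstacle, if any, is organizational rather than technical: one must be careful to distinguish $M_H$ acting as the multiplier space from its role in defining the QOI, and to correctly translate the abstract data of \cref{eq:defR} into the unit-coordinate data of \cref{pbphiE}; the single genuinely substantive input is the inf--sup condition \cref{infsupbglob}, which has already been provided.
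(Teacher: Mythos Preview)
Your proposal is correct and follows essentially the same approach as the paper: both establish well-posedness via the inf--sup condition~\cref{infsupbglob}, show $\tilde\varphi_j\in\tilde V_H$ from~\cref{eq:LODbasis1}, and prove~\cref{Rsumbas} by checking that $\mathcal{R}v-\sum_j q_j(v)\tilde\varphi_j$ lies in $\tilde V_H\cap W=\{0\}$. The only cosmetic difference is that you reduce~\cref{pbphiE} to~\cref{eq:defR} by constructing auxiliary functions $v_j$ with $q_i(v_j)=\delta_{ij}$, whereas the paper applies saddle point theory directly to~\cref{pbphiE}; both routes rest on the same inf--sup condition and yield the same conclusion.
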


\begin{proof}
The well-posedness of problem \cref{pbphiE} follows from classical saddle point theory, using the inf--sup condition~\cref{infsupbglob}; see, for instance, \cite[Cor.~4.2.1]{BofBF13}. Taking $v\in W$ as test function in~\eqref{eq:LODbasis1} proves that $\tilde{\varphi}_j\in \tilde{V}_H$. To show~\cref{Rsumbas}, we define \( e \coloneqq \mathcal{R}v - \sum_{j=1}^J q_j(v) \tilde{\varphi}_j \) and observe that \( e \in W \). 
Indeed, using \cref{eq:defR2,eq:LODbasis3} and the identity   $b(v,\mu) = \sum_{j=1}^J \mu_jq_j(v)$, cf. \cref{eq:coordinates}, we find that \( q_i(e) = q_i(\mathcal{R}v) - q_i(v) = 0 \) for all \( i \in \{1, \dots, J\} \). 
Moreover, since \( e \in \tilde{V}_H \), it follows that \( e = 0 \), proving~\cref{Rsumbas} and that the functions in~\cref{pbphiE} form a basis of \( \tilde{V}_H \).
\end{proof}
	
\section{Exponential decay}\label{sec:decay}

We emphasize that the prototypical LOD basis functions defined in \cref{pbphiE} are globally supported. Therefore, their computation would require the solution to global problems, which is infeasible for practical purposes. In this section, we show that the prototypical LOD basis functions decay exponentially, which motivates their approximation by locally computable counterparts in \cref{sec:localization}. A practical multiscale method based on such local approximations is presented in \cref{sec:locmethod}.
To quantify the decay of the basis functions, and more generally of the solutions to saddle points problem of type (\ref{pbphiE}), we introduce the notion of patches with respect to the mesh $\TH$. Given an \emph{oversampling parameter} $\ell \in \mathbb N$, we define the $\ell$-th order patch of a union of elements $S \subset \TH$ recursively for $\ell \geq 2$ by $\Nb^\ell(S) \coloneqq  \Nb^1(\Nb^{\ell-1}(S))$, where $\Nb^1(S) \coloneqq \Nb(S)$ is the set of mesh elements sharing at least a node with the elements in $S$.
Before proving the exponential decay result, we state a technical lemma used in the proof, which is a reformulation of \cite[Cor.~3.6]{Maier2021}.
\begin{lemma}[Non-standard inverse inequality]\label{lem:invineq}
	There exists a constant \( C_{\mathrm{i}} > 0 \), independent of \( H \), such that for all \( T \in \mathcal{T}_H \) and all \( \mu \in M_H \)
	\begin{equation*}
		\| \mu \|_{T} \lesssim H^{-1}\| \mu \|_{- 1, T},
	\end{equation*}
	where the \( H^{-1} \)-norm is defined by
	$
	\| \mu \|_{- 1, T} \coloneqq \sup_{w \in H^1_0 (T), w \not=0 } {(\mu, w)_T}\big/{\| \nabla w \|_{T}}.
	$
\end{lemma}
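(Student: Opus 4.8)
The claim is a purely local statement on a single simplex $T$: for all $\mu\in M_H$ (restricted to $T$) one has $\|\mu\|_T\lesssim H^{-1}\|\mu\|_{-1,T}$, where the $H^{-1}$-norm is the dual norm with respect to $H^1_0(T)$. I would prove this by a scaling/reference-element argument combined with a bubble-function construction. First I would note that both sides behave homogeneously under the affine map from the reference simplex $\hat T$ to $T$: writing $\mu=\hat\mu\circ F_T^{-1}$ with $F_T$ affine and $|T|\sim H^d$, a change of variables gives $\|\mu\|_T^2\sim H^d\|\hat\mu\|_{\hat T}^2$, while $\|\nabla w\|_T^2\sim H^{d-2}\|\hat\nabla\hat w\|_{\hat T}^2$ and $(\mu,w)_T\sim H^d(\hat\mu,\hat w)_{\hat T}$, so that $\|\mu\|_{-1,T}\sim H^{d/2+1}\|\hat\mu\|_{-1,\hat T}$ and the claimed inequality on $T$ reduces to the scale-invariant statement $\|\hat\mu\|_{\hat T}\lesssim\|\hat\mu\|_{-1,\hat T}$ on the fixed reference element, with the $H^{-1}$ entirely accounted for by the scaling factors. (One must be slightly careful in the CG case, since there $M_H$ is not elementwise the full $\mathcal P^p$; but the restriction $\mu|_T$ still lies in $\mathcal P^p(T)$, and the inequality for all of $\mathcal P^p(T)$ is stronger, so it suffices to treat $\mathcal P^p(\hat T)$.)

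On the reference element the statement is: the $L^2$-norm on $\mathcal P^p(\hat T)$ is bounded by the $H^1_0(\hat T)$-dual norm. This is true because on the finite-dimensional space $\mathcal P^p(\hat T)$ the dual norm $\|\cdot\|_{-1,\hat T}$ is in fact a genuine norm, not just a seminorm: if $(\hat\mu,\hat w)_{\hat T}=0$ for all $\hat w\in H^1_0(\hat T)$, then $\hat\mu=0$ since $H^1_0(\hat T)$ is dense in $L^2(\hat T)$. Hence $\|\cdot\|_{-1,\hat T}$ and $\|\cdot\|_{\hat T}$ are two norms on the finite-dimensional space $\mathcal P^p(\hat T)$ and are therefore equivalent, which gives the desired constant depending only on $p$ and $d$. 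A cleaner, constant-explicit route is to pick, for given $\hat\mu$, a specific test function: take $\hat w:=b_{\hat T}\hat\mu$, where $b_{\hat T}$ is the standard (product-of-barycentric-coordinates) bubble function vanishing on $\partial\hat T$; then $b_{\hat T}\hat\mu\in H^1_0(\hat T)$, and by norm equivalence on the finite-dimensional space $\mathcal P^{p+d+1}(\hat T)$ one has $(\hat\mu,b_{\hat T}\hat\mu)_{\hat T}\gtrsim\|\hat\mu\|_{\hat T}^2$ and $\|\nabla(b_{\hat T}\hat\mu)\|_{\hat T}\lesssim\|\hat\mu\|_{\hat T}$, whence $\|\hat\mu\|_{\hat T}\lesssim(\hat\mu,\hat w)_{\hat T}/\|\nabla\hat w\|_{\hat T}\le\|\hat\mu\|_{-1,\hat T}$. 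Either argument works; I would use the compactness/norm-equivalence version for brevity, since the paper does not track $p$-dependence.

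I expect the only real subtlety, and the place a careful proof must not be sloppy, to be the bookkeeping of the powers of $H$ in the scaling step — in particular confirming that the $L^2$-mass factor $H^d$ and the $H^1_0$-seminorm factor $H^{d-2}$ combine through the definition of the dual norm to produce exactly one net power $H^{-1}$, and that the shape-regularity of $\mathcal T_H$ makes all hidden constants uniform in $T$ and $H$. There is no genuine analytical obstacle here: everything reduces to a finite-dimensional norm-equivalence on a fixed reference element plus affine scaling, and the result is stated as merely a reformulation of \cite[Cor.~3.6]{Maier2021}. Accordingly I would keep the proof short, essentially citing that corollary after the scaling reduction, and only spell out the reference-element equivalence if a self-contained argument is wanted.
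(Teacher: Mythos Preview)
Your proposal is correct. The paper itself does not give a proof of this lemma at all; it merely states that the result ``is a reformulation of \cite[Cor.~3.6]{Maier2021}'' and moves on. Your scaling-to-the-reference-element argument followed by finite-dimensional norm equivalence (or the explicit bubble-function test) is precisely the standard route to such inverse-type estimates and is essentially what underlies the cited corollary, so there is nothing to correct or add.
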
  

The exponential decay will be proven in the following theorem in a rather general setting. Note that the exponential decay of the prototypical LOD basis functions can be recovered by setting \( f_S(v) = 0 \) for all $v \in V$ and \( g_S(\mu) = \mu_j \) for all \( \mu \in M_H \), where \( \mu_j \) is the coordinate of \( \Lambda_j \) in the basis expansion of \( \mu \), cf.~\cref{eq:coordinates}.

\begin{theorem}[Exponential decay]\label{thm:dec}
    Let $S$ be a union of mesh elements $S \subset \TH$ and $\psi\in V$ be the solution to the problem: find $(\psi,\lambda)\in V\times M_H$ such that
    \begin{subequations}
		\label{pbpsi} 
		\begin{align}
			&\qquad\qquad \qquad a (\psi, v)& +&  &b(v,\lambda) & &=\quad  &f_S(v),&&\forall v \in V,\qquad \qquad\quad &&\label{eq:psi1}\\
			&\qquad\qquad\qquad b(\psi,\mu)                   &   &         &    & &=\quad  &g_S(\mu),&&\forall \mu \in M_H,\qquad \qquad\quad\label{eq:psi3}&&
		\end{align}
	\end{subequations}
	where $f_S$ and	$g_S$ are bounded linear functionals on  $V$ and $M_H$, respectively, such that $f_S(v) = 0$ for all $v \in V$ with$ \supp(v) \subset \overline{\Omega \setminus S}$ and analogously for~$g_S$. 
Then there exists \( C_\mathrm{dec}> 0 \), independent of \( H \), \( \ell \), and \( S \), such that for all \( \ell \in \mathbb{N} \) 
\begin{equation}
	\label{eq:dec}
	\|\nabla \psi\|_{\Omega \setminus \mathsf{N}^\ell(S)} \leq \exp(-C_\mathrm{dec} \ell) \|\nabla \psi\|_\Omega.
\end{equation}
\end{theorem}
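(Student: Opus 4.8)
The plan is to prove the exponential decay by the standard LOD iteration argument, adapted to the saddle-point setting with Lagrange multipliers in $M_H$. Let me outline the strategy.

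\paragraph{Cutoff functions and the one-step decay.} First I would fix a patch $S$ and, for a given $m \in \mathbb{N}$, introduce a Lipschitz cutoff function $\chi = \chi_m \in W^{1,\infty}(\Omega)$ with $0 \le \chi \le 1$, $\chi \equiv 0$ on $\mathsf{N}^{m}(S)$, $\chi \equiv 1$ on $\Omega \setminus \mathsf{N}^{m+k}(S)$ for some fixed buffer width $k$ (say $k=3$ or $k=4$, chosen so that there is room for the forthcoming interpolation/commutation estimates), with $\|\nabla \chi\|_{L^\infty} \lesssim H^{-1}$. The key quantity to control is $\|\nabla \psi\|_{\Omega \setminus \mathsf{N}^{m+k}(S)}^2 \le \alpha^{-1} a(\psi, \chi^2 \psi)$ up to constants from \eqref{eq:propA}. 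Now I want to test \eqref{eq:psi1} with $v = \chi^2 \psi$; this is legitimate since $\chi^2 \psi \in V$. Because $\mathrm{supp}(\chi^2\psi)$ is disjoint from $S$ (for $m \ge 1$), the right-hand side $f_S(\chi^2\psi)$ vanishes, so
\[
 a(\psi, \chi^2\psi) = -\,b(\chi^2\psi, \lambda).
\]
The term $a(\psi,\chi^2\psi)$ splits, via $\nabla(\chi^2\psi) = \chi^2\nabla\psi + 2\chi\psi\nabla\chi$, into the "good" term $\int (A\nabla\psi)\cdot\nabla\psi\,\chi^2$ plus a cross term bounded by $C H^{-1}\|\nabla\psi\|_{R}\|\psi\|_{R}$ where $R$ is the annular region $\mathsf{N}^{m+k}(S)\setminus\mathsf{N}^{m}(S)$ where $\nabla\chi$ is supported; a Poincaré inequality on the elements of $R$ (using that $\psi$ need not vanish, but $\chi$ is constant outside $R$, so effectively only local oscillation enters after subtracting a suitable piecewise constant — this is the usual LOD trick) converts $\|\psi\|_R$ to $H\|\nabla\psi\|_{\hat R}$ on a slightly enlarged annulus $\hat R$. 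So the cross term is controlled by $C\|\nabla\psi\|_{\hat R}^2$.

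\paragraph{Handling the multiplier term.} The genuinely new ingredient compared to the classical LOD is the term $b(\chi^2\psi,\lambda) = (\chi^2\psi,\lambda)_\Omega$. The trick, as in \cite{Maier2021}, is to exploit the constraint \eqref{eq:psi3}: since $\psi$ satisfies $(\psi,\mu)_\Omega = g_S(\mu)$ and $g_S$ is supported in $S$, we have $(\psi,\mu)_\Omega = 0$ for all $\mu \in M_H$ supported away from $S$, i.e. $\psi$ is $L^2$-orthogonal to a large part of $M_H$. More precisely, on the subdomain $\Omega \setminus \mathsf{N}^1(S)$ one can replace $\chi^2\lambda$ by $\chi^2\lambda - \Pi(\chi^2\lambda)$-type corrections — but the clean way is: write $(\chi^2\psi,\lambda)_\Omega$ and use that $\psi \perp_{L^2} \{\mu \in M_H : \mathrm{supp}(\mu)\cap \mathsf{N}^1(S) = \emptyset\}$, together with the non-standard inverse inequality \cref{lem:invineq} to absorb a negative power of $H$. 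Concretely, on each element $T \subset \Omega\setminus\mathsf{N}^1(S)$ one estimates $|(\chi^2\psi,\lambda)_T|$ by duality: the relevant piece of $\lambda$ can be tested against $H^1_0(T)$ functions, giving $\|\lambda\|_{-1,T} \lesssim \|\nabla\psi\|_T$-type bounds after using the first equation again locally, and then $\|\lambda\|_{-1,T}$ controls $\|\lambda\|_T$ only up to $H^{-1}$, which is exactly compensated by the $H$ gained from Poincaré on $\psi$. I expect this bookkeeping — getting $|b(\chi^2\psi,\lambda)| \lesssim \|\nabla\psi\|_{\hat R}^2$ with a constant independent of $H$ — to be the main obstacle, and it is where \cref{lem:invineq} and the constraint \eqref{eq:psi3} are both essential.

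\paragraph{Iteration.} Combining the three estimates yields the one-step inequality
\[
 \|\nabla\psi\|_{\Omega\setminus\mathsf{N}^{m+k}(S)}^2 \le C_0\,\|\nabla\psi\|_{\mathsf{N}^{m+k}(S)\setminus\mathsf{N}^{m}(S)}^2
\]
for a constant $C_0 > 0$ independent of $H$, $\ell$, $m$, $S$. Writing the annulus norm as the difference $\|\nabla\psi\|_{\Omega\setminus\mathsf{N}^{m}(S)}^2 - \|\nabla\psi\|_{\Omega\setminus\mathsf{N}^{m+k}(S)}^2$ and rearranging gives
\[
 \|\nabla\psi\|_{\Omega\setminus\mathsf{N}^{m+k}(S)}^2 \le \frac{C_0}{C_0+1}\,\|\nabla\psi\|_{\Omega\setminus\mathsf{N}^{m}(S)}^2,
\]
so iterating $\lfloor \ell/k \rfloor$ times from $m=0$ produces the geometric factor $\big(\tfrac{C_0}{C_0+1}\big)^{\lfloor \ell/k\rfloor}$, which is $\le \exp(-C_\mathrm{dec}\,\ell)$ for $C_\mathrm{dec} = \tfrac{1}{2k}\log\!\big(\tfrac{C_0+1}{C_0}\big)$ (adjusting the constant to absorb the floor). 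Taking square roots gives \eqref{eq:dec}. Throughout, shape-regularity and quasi-uniformity of $\{\mathcal{T}_H\}_H$ are used to ensure the finite-overlap property of the annuli and that $\|\nabla\chi\|_{L^\infty}\lesssim H^{-1}$, so all constants depend only on the admissible data $d,\alpha,\beta,p$ and the mesh regularity, not on $H$ or $\ell$.
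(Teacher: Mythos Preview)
Your outline is correct and follows essentially the same strategy as the paper: test \eqref{eq:psi1} with a cutoff times $\psi$, control the annular cross term by a Poincar\'e-type inequality (the paper uses the weighted version \eqref{eq:pctype} on vertex patches $\omega_z$, exploiting $b(\psi,\Lambda_z^1)=0$ from \eqref{eq:psi3}, rather than subtracting piecewise constants), handle the multiplier term via the constraint orthogonality combined with \cref{lem:invineq} and local testing of \eqref{eq:psi1} with $v\in H^1_0(T)$, and then iterate. The one step the paper makes more precise than your sketch is the localization of $b(\eta\psi,\lambda)$: it decomposes $\lambda=\lambda^{\mathrm{in}}+\lambda^{\mathrm{out}}$ according to whether the associated Lagrange nodes lie in $\operatorname{int}(\Nb^{\ell+1}(S))$, so that $\lambda^{\mathrm{out}}=0$ on $\Nb^\ell(S)$ and hence $b(\eta\psi,\lambda^{\mathrm{out}})=b(\psi,\lambda^{\mathrm{out}})=0$ by \eqref{eq:psi3}, which confines the remaining contribution to the annulus $R^2=\operatorname{int}(\Nb^{\ell+1}(S)\setminus\Nb^{\ell-1}(S))$ and yields a two-layer recursion ($k=2$ in your notation, with test function $\eta\psi$ rather than $\chi^2\psi$).
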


\begin{proof} 
The proof is based on ideas from~\cite{MalP14,HenP13,MalP20}. For the DG-LOD, it follows directly from the arguments in~\cite[Thm.~4.1]{Maier2021}. In the CG-LOD case, some modifications are necessary, as outlined below.
Fix an integer \( \ell \ge 1 \), and let \( \eta \in W^{1,\infty}(\Omega) \) be the first-order finite element cut-off function characterized by
  \begin{equation}
      \eta  =0 \text{ in } \Nb^{\ell - 1} (S),\quad
      \eta  =1 \text{ in } \Omega \setminus \Nb^{\ell} (S),  \label{eq:eta}
  \end{equation}
  with the transition region $R \coloneqq \Nb^\ell(S)\setminus \Nb^{\ell-1}(S)$, satisfying
  \begin{equation}
    \label{eq:boundeta} 
    \| \eta \|_{L^\infty(\Omega)} \le 1, \quad
    \| \nabla \eta \|_{L^\infty(\Omega)} \lesssim H^{-1}.
  \end{equation} 
Furthermore, let
\begin{equation}\label{restricta}
	 a_\omega(v, w) \coloneqq \int_\omega (A \nabla v) \cdot \nabla w \, \mathrm{d}x
\end{equation}
denote the restriction of the bilinear form \( a \) to a subdomain \( \omega \subset \Omega \). 
Testing \cref{eq:psi1} with \( \eta \psi \) and decomposing the suport of $\eta $ into $\Omega \setminus \mathsf{N}^\ell(S)$ and $R$, we obtain that
\[
a_{\Omega \setminus \mathsf{N}^\ell(S)}(\psi, \eta \psi) = f_S(\eta \psi) - a_R(\psi, \eta \psi) - b(\eta \psi, \lambda).
\]
Using the lower bound from~\eqref{eq:propA}, we estimate this by
  \begin{align} \label{eq:decaymanipulations}
    \alpha \| \nabla \psi \|_{\Omega \setminus \Nb^{\ell} (S)}^2  \leq |f_S(\eta \psi)| + |a_R (\psi, \eta \psi)| + |b(\eta \psi, \lambda)|  
     \eqqcolon \Xi_1 + \Xi_2 + \Xi_3. 
  \end{align}

  We now consider terms \( \Xi_1\)--\(\Xi_3 \) individually. We have \( \Xi_1 = 0 \) since \( (\eta \psi)|_S =0 \). To estimate \( \Xi_2 \), we apply \cref{eq:propA,eq:boundeta}, which yields
  \begin{equation}\label{eq:boundXi2} 
     \Xi_2 \leq \beta \| \nabla \psi \|_R  \| \nabla (\eta \psi)\|_R \lesssim \| \nabla \psi \|_R^2 + H^{- 1} \| \nabla \psi \|_R \|
     \psi \|_R.
  \end{equation}
 To estimate the right-hand side of the previous inequality, let \( \Lambda_z^1 \) denote the \( \mathcal{P}^1 \)-finite element hat function associated with the node \( z \), and define \( \omega_z \coloneqq \supp(\Lambda_z^1) \). Then, we have the Poincaré-type inequality
 \begin{equation}
 	\label{eq:pctype}
   	\| v \|_{\omega_z} \leq C_\mathrm{P} H \| \nabla v \|_{\omega_z}
 \quad \forall v \in \big\{H^1(\omega_z) \with \tint_{\omega_z} \Lambda_z^1 v \dx = 0\big\},
 \end{equation}
 which can be shown with the Peetre--Tartar lemma (see, e.g.,  \cite[Lem.~A.38]{ErG04}), a scaling argument, and the maximization of $C_\mathrm{P}$  over all the patch configurations that are admissible by the mesh regularity.
To be able to apply \cref{eq:pctype} for estimating the right-hand side of \cref{eq:boundXi2}, we cover \( R \) by a collection of patches \( \omega_z \) such that the union of the $\omega_z$ equals 
\( R^2 \coloneqq \operatorname{int}\big( \mathsf{N}^{\ell+1}(S) \setminus \mathsf{N}^{\ell - 1}(S) \big).\) 
Testing \cref{eq:psi3} with~\( \Lambda_z^1 \) and noting that \( \omega_z \cap S = \emptyset \) gives  $
\int_{\omega_z} \Lambda_z^1 \psi \dx = 0$.
Thus, applying~\cref{eq:pctype} to~\( \psi \) locally on each patch in the collection and summing up, we obtain that  
\(
\| \psi \|_{R} \lesssim H \| \nabla \psi \|_{R^2},
\)
using the finite  overlap of the  \( \omega_z \).
Substituting this into \cref{eq:boundXi2}, we get
  \begin{equation}
    \label{eq:Xi2} \Xi_2  \lesssim  \| \nabla \psi \|_{R^2}^2.
  \end{equation}
  
  To estimate \( \Xi_3 \), we localize it to the ring \( R^2 \). To this end, we decompose~\( \lambda \) as \( \lambda = \lambda^\mathrm{in} + \lambda^\mathrm{out} \), where \( \lambda^\mathrm{in} \) is the linear combination of basis functions of \( M_H \) supported only in \( \mathsf{N}^{\ell+1}(S) \), and \( \lambda^\mathrm{out} \) is the combination of the remaining basis functions, associated with Lagrange points not in the interior of \( \mathsf{N}^{\ell+1}(S) \). This decomposition is  well-defined and unique. Since \( \eta \psi = \psi \) in  \( \Omega \setminus \mathsf{N}^{\ell}(S) \) and \( \lambda^\mathrm{out} = 0 \) in \( \mathsf{N}^{\ell}(S) \), it follows from \eqref{eq:psi3} that \( b(\eta \psi, \lambda^\mathrm{out}) = b(\psi, \lambda^\mathrm{out}) = 0 \). Thus,
\begin{equation}
	\label{eq:x13firstest}
	\Xi_3 = 
    b  (\eta \psi, \lambda^\mathrm{in}) =
	\int_{R^2} \eta \psi \lambda^\mathrm{in} \dx
	\leq \| \eta \psi \|_{R^2}\| \lambda^\mathrm{in} \|_{R^2} 
	\lesssim \| \psi \|_{R^2}\| \lambda\|_{R^2},
\end{equation}
noting that the integral above could be restricted to \( R^2 \) since \( \eta = 0 \) on \( \mathsf{N}^{\ell - 1}(S) \) and \( \lambda^\mathrm{in} = 0 \) outside \( \mathsf{N}^{\ell + 1}(S) \). The last estimate in \cref{eq:x13firstest} follows from \( \| \lambda^\mathrm{in} \|_{R^2} \lesssim  \| \lambda \|_{R^2} \), which can be proved by a scaling argument on each element, noting that \( \lambda^\mathrm{in} \) is uniquely determined by \( \lambda \).
 To further estimate \cref{eq:x13firstest}, we consider an arbitrary element $T\subset \overline{R^2}$ and observe  $\|\lambda\|_T \lesssim H^{-1}\| \lambda \|_{- 1, T}$ by \cref{lem:invineq}. Testing~\eqref{eq:psi1} with any \( v \in H^1_0(T) \) satisfying \( \|\nabla v\|_T = 1 \)  and noting that $f_S (v) = 0$, yields 
  \begin{equation*}
  	(\lambda,v)_T   = b(v, \lambda) = - a (\psi, v) 
    \leq \beta \| \nabla \psi \|_T \| \nabla v \|_T  ,
  \end{equation*}
  which implies that $\| \lambda \|_{- 1, T} \leq \beta \| \nabla \psi \|_T$. Applying \cref{eq:pctype} as above, combining the previous estimates, and summing over all \( T \subset \overline R \), we obtain that
  \begin{equation}\label{eq:Xi3} 
    \Xi_3 \lesssim 
     H\| \nabla \psi \|_{R^2} \Big(\sum_{T \subset R^2}  H^{-2} \| \nabla \psi \|_T^2 \Big)^{1/2}
    \lesssim \| \nabla \psi \|_{R^2}^2\,.
  \end{equation}
  
Substituting~\cref{eq:Xi2,eq:Xi3} into~\eqref{eq:decaymanipulations}, and noting that the ring \( R^2 \) can be written as \( R^2 = \operatorname{int}\big( (\Omega \setminus \Nb^{\ell - 1}(S)) \setminus (\Omega \setminus \Nb^{\ell+1}(S)) \big) \), we conclude, for a constant $C>0$ independent of $H$ and $\ell$ that
  \begin{align*}
    \| \nabla \psi \|_{\Omega \setminus \Nb^{\ell} (S)}^2 \leq C \| \nabla
    \psi \|_{R^2}^2 = C\| \nabla \psi \|_{\Omega \setminus \Nb^{\ell - 1} (S)}^2 -
    C \| \nabla \psi \|_{\Omega \setminus \Nb^{\ell+1} (S)}^2,
  \end{align*} 
which, after rearranging the terms, leads to
  \begin{equation}\label{jump2lay} 
   \| \nabla \psi \|_{\Omega \setminus \Nb^{\ell+1} (S)}^2 \leq \frac{C}{1 + C}  \,\| \nabla \psi \|_{\Omega \setminus \Nb^{\ell - 1}
     (S)}^2.
   \end{equation} 
   Iterating the argument gives the assertion with $C_\mathrm{dec}\coloneqq\frac 14 \log \frac{1+C}{C}$.
\end{proof}

\begin{remark}[Better decay rate for DG-LOD]
As mentioned above, the proof of exponential decay for the DG-LOD is very similar. 
Inequality~\eqref{eq:pctype} can be replaced by the standard Poincaré inequality for mean-zero functions on each mesh element. Thus, estimate \cref{eq:Xi2} can be localized to  ring~\( R \) of width one instead of \( R^2 \), noting that piecewise constants lie in \( M_H \).
Likewise, in \eqref{eq:x13firstest} the form \( b \) localizes to~\( R \), since contributions outside \( \Nb^\ell(S) \) vanish.  Therefore,~\eqref{jump2lay} can be replaced by
\begin{equation}
	\label{eq:dec2}
	 \| \nabla \psi \|_{\Omega \setminus \Nb^{\ell} (S)}^2 \leq \frac{C}{1 + C}  \,\| \nabla \psi \|_{\Omega \setminus \Nb^{\ell - 1}(S)}^2
\end{equation}
 with a constant $C>0$, and the resulting decay rate is $\frac 12 \log \frac{1+C}{C}$. Although $C$ might not be the same as in \cref{jump2lay}, the more local nature of estimate \cref{eq:dec2} gives some theoretical underpinning of the better localization properties of the DG-LOD compared to the CG-LOD. This is confirmed by numerical experiments in \cref{sec:numerics}. 
\end{remark}

\section{Localization}
\label{sec:localization}

The exponential decay properties established in the previous section motivate the localized computation of the prototypical LOD basis functions. However, the naive strategy of localizing these basis functions by restricting problem~\eqref{pbphiE} to \( \ell \)-th order patches has the drawback that the localization error increases when decreasing the mesh size if the parameter \( \ell \) is kept fixed, cf.~\cite{MalP14,Maier2021}.
One strategy to address this, described in \cref{itemCorrVH} on \cpageref{itemCorrVH}, expresses the prototypical LOD space  \( \tilde{V}_H \) as \( \tilde{V}_H = V_H - \mathcal{C}V_H \), where \( \mathcal{C} \colon V \to W \) denotes the \( a \)-orthogonal projection onto the fine-scale space \( W \). The operator $\mathcal C$ is then decomposed into a sum of element-wise contributions, which are subsequently localized. Note that the projection operator \( \mathcal{R} \) can then be written as \( \mathcal{R} = \mathcal{I}_H - \mathcal{C}\mathcal{I}_H \) if $\mathcal{I}_H\colon V\to V_H$ is a projection. However, this strategy requires the fine-scale space to be defined as the kernel of a quasi-interpolation operator \( \mathcal{I}_H \colon V \to V_H \), where~\( V_H \) is some $H^1_0(\Omega)$-conforming finite element space. In the present setting, this construction is not directly applicable, as the space \( M_H \) is not \( H^1_0(\Omega) \)-conforming. 

Our localization approach mimics the above construction by expressing \( \mathcal{R} \) as
\begin{equation}\label{defK}
	\mathcal{R} = \mathcal{I}_H - \mathcal{K},
\end{equation}
where the operator \( \mathcal{K} \colon V \to V \) will be characterized below, and \( \mathcal{I}_H \colon V \to V_H \) is a quasi-interpolation operator onto the first-order \( H^1_0(\Omega) \)-conforming finite element space \( V_H \) associated with the mesh \( \mathcal{T}_H \). 
Note that \( V_H \) is fixed as the first-order finite element space regardless of the polynomial degree \( p \).
The quasi-interpolation operator \(\mathcal{I}_H\) (which is not necessarily a projection) is assumed to satisfy classical local approximation and stability estimates, i.e.,  
\begin{equation}
	\label{eq:propIH}
	H^{-1}\|v - \mathcal{I}_H v\|_T + \|\nabla \mathcal{I}_H v\|_T \lesssim 
	 \|\nabla v\|_{\mathsf{N}(T)},\quad \forall T \in \TH,\; \forall v \in V.
\end{equation}
Moreover, the operator \(\mathcal{I}_H\) should depend on its argument only through its QOI defined in \cref{defQOI}, i.e., \(\mathcal{I}_H w = 0\) for all $w \in W$. Such operators can be readily constructed for the two classes of methods considered. 
Indeed, for the CG-LOD, a weighted Cl\'ement-type interpolation can be used. This interpolation is uniquely defined by assigning values at interior nodes $z \in \mathcal N_H^\mathrm{in}$ as
\begin{equation}\label{defIHcg} 
	(\mathcal{I}_H v)(z) \coloneqq \left(\int_{\omega_z} \Lambda_z^1 \, \mathrm{d}x \right)^{-1} \int_{\omega_z} v \, \Lambda_z^1 \, \mathrm{d}x, \quad \forall v \in V,
\end{equation}
where \(\omega_z\) is the support of the first-order finite element hat function \(\Lambda_z^1\) associated with node \(z\). Nodal values at boundary vertices are set to zero. Such operators were, e.g., introduced in~\cite{Car99}.
For the DG-LOD method, a quasi-interpolation can be uniquely defined by specifying its nodal values at interior nodes \( z \in \mathcal{N}_H^\mathrm{in} \) as
\begin{equation}\label{defIHdg}
   	(\mathcal{I}_H v)(z) \coloneqq \sum_{T\in\omega_z}\frac{|T|}{|\omega_z|}\int_{T} v \dx,\quad \forall v \in V,
\end{equation}
where $|\cdot|$ denotes the $d$-dimensional volume of a subdomain, 
and values at boundary nodes are again set to zero; see, e.g., \cite{ErnG17} for an analysis in a more general setting.
    
The operator \( \mathcal{K} \colon V \to V \), as introduced in \cref{defK}, is characterized for each \( v \in V \) as the unique solution \( (\mathcal{K}v, \lambda) \in V \times M_H \) to the saddle point problem
\begin{subequations}
	\label{eq:defK} 
	\begin{align}
		&\qquad\quad  a (\mathcal K v, w)& +&  &b(w,\lambda) & &=\quad  &a(\mathcal I_H v,w),&&\forall w \in V,\quad \quad &&\label{eq:defK1}\\
		&\qquad\quad b(\mathcal K v,\mu)                   &   &         &    & &=\quad  &-c(v,\mu),&&\forall \mu \in M_H,\quad \quad\label{eq:defK2}&&
	\end{align}
\end{subequations}
where we use the abbreviation $c(v,\mu) \coloneqq b(v-\mathcal I_H v,\mu)$.
The operator \( \mathcal{K} \) can now be expressed as the following sum of local element contributions, i.e.,
\begin{equation*}
	\mathcal{K} = \sum_{T \in \mathcal{T}_H} \mathcal{K}_T,
\end{equation*}
where, for all $T \in \TH$, the operators \( \mathcal{K}_T \colon V \to V \) are defined for \( v \in V \) as the unique solution \( (\mathcal{K}_T v, \lambda_T) \in V \times M_H \) to the modified saddle point problem
	\begin{subequations}
	\label{eq:defKT} 
	\begin{align}
		&\qquad\quad  a (\mathcal K_T v, w)& +&  &b(w,\lambda_T) & &=\quad  &a_T(\mathcal I_H v,w),&&\forall w \in V,\quad\quad &&\label{eq:defKT1}\\
		&\qquad\quad b(\mathcal K_T v,\mu)                   &   &         &    & &=\quad  &-c_T(v,\mu),&&\forall \mu \in M_H.\quad \quad\label{eq:defKT2}&&
	\end{align}
\end{subequations}
Here, in contrast to \cref{eq:defK}, we use localized versions of the bilinear forms \( a \) and \( c \) on the right-hand side of the problem, denoted by \( a_T \) and \( c_T \), respectively. The local form~\( a_T \) is defined by restricting $a$ to $T$, as in (\ref{restricta}).
To define the local form \( c_T \), there are generally several meaningful options. We formulate the following three properties that the bilinear form \( c_T \) should satisfy:
\begin{enumerate}\label{propbtilde}
    \item \emph{Summation property:} $\sum_{T \in \TH} c_T(v,\mu) =  c(v,\mu)$ for all $v\in V$ and  $\mu\in M_H$;\label{cond1}
    \item \emph{Vanishing on constants:} $c_T (v,\mu)= 0$ for all $\mu\in M_H$ if $v =1$ on $\mathsf N(T)$ for elements $T$ that do not contain any boundary nodes;\label{cond2}
    \item \emph{Vanishing on fine scales:} $c_T(w,\mu) = 0$ for all $w \in W$ and $\mu \in M_H$;\label{cond3}
    \item \emph{Locality:} $|c_T(v,\mu)| \lesssim \|v\|_{\mathsf N(T)} \|\mu\|_{\mathsf N(T)}$ for all $v \in V$  and $\mu \in M_H$.\label{cond4}
\end{enumerate}
For the CG-LOD, the form $c_T$ can be constructed as
\begin{equation}\label{tildebT}
    c_T(v,\mu) \coloneqq \sum_{j=1}^J 
    \frac{|T\cap\omega_j|}{|\omega_j|}
    {\mu}_j q_j (v) -   \int_T \mu \mathcal{I}_H v \dx,
\end{equation}     
where \( \mu_j \) is the coordinates of $\Lambda_j$ in the basis expansion of $\mu$, cf.~\cref{eq:coordinates}.
The properties in \cref{cond1,cond3} can be verified directly, and \cref{cond4} follows from the equivalence of norms on the finite dimensional space $M_H$ restricted to $\mathsf N(T)$. The property in \cref{cond2} follows from the fact that the operator \( \mathcal{I}_H \) preserves constants on an element $T$ that does not contain any boundary nodes, and $ \int_T \Lambda_j\dx =\frac{|T|}{|\omega_j|}{\int_{\omega_j} \Lambda_j\dx }$ provided that $T\subset \overline{\omega_j}$, using the change of variables formula to relate the element integrals.
For the DG-LOD method, \eqref{tildebT} can be used as well and simplifies to
\begin{equation*}
	c_T(v, \mu) = \int_T (v - \mathcal{I}_H v)\, \mu \, \mathrm{d}x,
\end{equation*}
since the supports $\omega_j$ of any $\Lambda_j$ consists of only one mesh element in this case. 

\

Due to the general setting considered in \cref{thm:dec}, the result applies to the function \( \mathcal{K}_T v \) for any \( v \in V \), and shows that it decays exponentially away from the element \( T \). This motivates the localization of the operator \( \mathcal{K}_T \) to $\ell$-th order patches around $T$. To this end, we introduce the corresponding localized spaces
\begin{align*}
	V_T^\ell &\coloneqq \{v \in V \with \supp(v) \subset \mathsf N^\ell(T)\}, \\
	M_T^\ell & \coloneqq \{\mu|_{\Nb^\ell(T)}  \with \mu\in M_H^p\},
\end{align*}
where we implicitly extend functions in the space $M_T^\ell$ by zero.
The localized operator \( \mathcal{K}_T^\ell \colon V \to V_T^\ell \) can then be defined, for any \( v \in V \), as the unique solution \( (\mathcal{K}_T^\ell v, \lambda_T^\ell) \in V_T^\ell \times M_T^\ell \) to the local saddle point problem
	\begin{subequations}
	\label{eq:defKTell} 
	\begin{align}
		&\qquad\quad  a (\mathcal K_T^\ell v, w)& +&  &b(w,\lambda_T^\ell) & &=\quad  &a_T(\mathcal I_H v,w),&&\forall w \in V_T^\ell,\quad\quad &&\label{eq:defKTell1}\\
		&\qquad\quad b(\mathcal K_T^\ell v,\mu)                   &   &         &    & &=\quad  &-c_T(v,\mu),&&\forall \mu \in M_T^\ell.\quad \quad\label{eq:defKTell2}&&
	\end{align}
\end{subequations}
Finally, a localized version of the operator $\mathcal R$ can be defined by
\begin{equation}
	\label{eq:defRl}
	\mathcal R^\ell v\coloneqq (\mathcal I_H - \mathcal K^\ell)v,\qquad \mathcal K^\ell v \coloneqq \sum_{T \in \mathcal{T}_H} \mathcal K_T^\ell v.
\end{equation}
The following theorem shows that the localized operator \( \mathcal{K}^\ell \) approximates \( \mathcal{K} \) exponentially well in the operator norm. Notably, it avoids the \( H^{-1} \) prefactor typically arising in naive localization strategies; see, e.g.,~\cite{MalP14,Maier2021}.

\begin{theorem}[Localization error]
	\label{thm:locerr}
        For all \( v \in V \) and \( \ell \in \mathbb{N} \), we have
		\begin{equation}
			\label{eq:locerrR}
			\|\nabla (\mathcal{R} - \mathcal{R}^\ell) v\|_{\Omega} \lesssim \ell^{(d-1)/2} \exp(-C_\mathrm{dec} \ell)\|\nabla \mathcal{R} v\|_\Omega,
		\end{equation}
		where \( C_\mathrm{dec} \) is the constant from \cref{thm:dec}.
\end{theorem}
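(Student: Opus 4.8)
The plan is to adapt the classical LOD localization analysis (\cite{MalP14,HenP13,Maier2021}) to the present saddle-point setting, using the element decomposition $\mathcal R-\mathcal R^\ell=\sum_{T\in\TH}(\mathcal K_T^\ell-\mathcal K_T)$ and tracking the supports carefully enough to obtain the annular factor $\ell^{(d-1)/2}$; a crude patch count would only give $\ell^{d/2}$.

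First I would reduce to $v\in\tilde V_H$. By the assumption $\mathcal I_H w=0$ for $w\in W$ and the vanishing-on-fine-scales property \cref{cond3}, the right-hand sides of \cref{eq:defKT,eq:defKTell} vanish whenever the argument lies in $W$, so $\mathcal I_H$, $\mathcal K_T$ and $\mathcal K_T^\ell$ all annihilate $W$. Since $v-\mathcal Rv\in W$ by \cref{eq:defR2}, this gives $\mathcal I_H v=\mathcal I_H\mathcal Rv$, $\mathcal K_Tv=\mathcal K_T\mathcal Rv$ and $\mathcal K_T^\ell v=\mathcal K_T^\ell\mathcal Rv$, hence $(\mathcal R-\mathcal R^\ell)v=(\mathcal R-\mathcal R^\ell)\mathcal Rv$; it therefore suffices to prove the bound for $v\in\tilde V_H$ with $\|\nabla v\|_\Omega$ on the right, and then apply it to $\mathcal Rv$. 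Next I would record two facts about $e\coloneqq(\mathcal R-\mathcal R^\ell)v=\mathcal K^\ell v-\mathcal Kv=\sum_T z_T$, $z_T\coloneqq(\mathcal K_T^\ell-\mathcal K_T)v$: that $e\in W$ — from the summation property \cref{cond1} and locality \cref{cond4} one checks $b(\mathcal R^\ell v,\mu)=b(v,\mu)$ for all $\mu\in M_H$, using that $\mathcal K_T^\ell v$ is supported in $\mathsf{N}^\ell(T)$ and \cref{eq:defKTell2} — and that by coercivity \cref{eq:propA}, $\alpha\|\nabla e\|_\Omega^2\le a(e,e)=\sum_T a(z_T,e)$.

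The heart of the proof is an element-wise estimate confining $a(z_T,e)$ to a thin annulus. Testing \cref{eq:defKT1} with $e\in W$ gives $a(\mathcal K_Tv,e)=a_T(\mathcal I_Hv,e)$ since $b(e,\lambda_T)=0$. Taking a first-order cut-off $\eta_T$ with $\eta_T=1$ on $\mathsf{N}^{\ell-1}(T)$, $\eta_T=0$ outside $\mathsf{N}^\ell(T)$ and $\|\nabla\eta_T\|_{L^\infty}\lesssim H^{-1}$, and testing \cref{eq:defKTell1} with $\eta_T e\in V_T^\ell$ (using $\eta_T\equiv1$ on $T$), the $a_T(\mathcal I_Hv,\cdot)$-terms cancel and one is left with
\[
a(z_T,e)=a(\mathcal K_T^\ell v,(1-\eta_T)e)-b(\eta_T e,\lambda_T^\ell).
\]
The first term is supported in the ring $R_T\coloneqq\mathsf{N}^\ell(T)\setminus\mathsf{N}^{\ell-1}(T)$; expanding $\nabla\big((1-\eta_T)e\big)$ and absorbing the $H^{-1}$ factor via the Poincaré inequality \cref{eq:pctype} (applicable since $e\in W$, as the relevant first-order functions lie in $M_H$), it is bounded by $\|\nabla\mathcal K_T^\ell v\|_{R_T}\|\nabla e\|_{\widehat R_T}$ on a slightly enlarged ring $\widehat R_T$. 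For the multiplier term I would split $\lambda_T^\ell=\lambda^\mathrm{in}+\lambda^\mathrm{out}$ into those $M_T^\ell$-basis functions whose support lies inside $\mathsf{N}^{\ell-1}(T)$ and the remainder: then $\lambda^\mathrm{in}\in M_H$ and $\eta_T e=e$ on its support, so $b(\eta_T e,\lambda^\mathrm{in})=(e,\lambda^\mathrm{in})_\Omega=0$, while $b(\eta_T e,\lambda^\mathrm{out})$ is supported in $\widehat R_T$ and is controlled by $\|e\|_{\widehat R_T}\|\lambda_T^\ell\|_{\widehat R_T}$, with $\|\lambda_T^\ell\|_{\widehat R_T}\lesssim H^{-1}\|\nabla\mathcal K_T^\ell v\|_{\widehat R_T}$ following from \cref{lem:invineq} together with \cref{eq:defKTell1} tested elementwise (whose $a_T(\mathcal I_Hv,\cdot)$-part vanishes away from $T$) and a further use of \cref{eq:pctype} on $e$ to cancel the $H^{-1}$. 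Finally, since $\mathcal K_T^\ell v$ solves a local saddle-point problem with data supported in $\mathsf{N}(T)$, the decay argument of \cref{thm:dec} run inside the patch $\mathsf{N}^\ell(T)$ yields $\|\nabla\mathcal K_T^\ell v\|_{\widehat R_T}\lesssim\exp(-C_\mathrm{dec}\ell)\|\nabla\mathcal K_T^\ell v\|_{\mathsf{N}^\ell(T)}$, and a standard stability estimate (test \cref{eq:defKTell} with $\mathcal K_T^\ell v$ and use \cref{cond2,cond4}, \cref{eq:pctype}, the $\mathcal I_H$-bound \cref{eq:propIH}, and \cref{eq:propA}) gives $\|\nabla\mathcal K_T^\ell v\|_{\mathsf{N}^\ell(T)}\lesssim\|\nabla v\|_{\mathsf{N}^2(T)}$. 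Altogether, $|a(z_T,e)|\lesssim\exp(-C_\mathrm{dec}\ell)\,\|\nabla v\|_{\mathsf{N}^2(T)}\,\|\nabla e\|_{\widehat R_T}$.

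Finally I would sum over $T$ and apply the Cauchy--Schwarz inequality:
\[
\alpha\|\nabla e\|_\Omega^2\lesssim\exp(-C_\mathrm{dec}\ell)\Big(\sum_{T\in\TH}\|\nabla v\|_{\mathsf{N}^2(T)}^2\Big)^{1/2}\Big(\sum_{T\in\TH}\|\nabla e\|_{\widehat R_T}^2\Big)^{1/2}.
\]
The first factor is $\lesssim\|\nabla v\|_\Omega$ by finite overlap of the patches $\mathsf{N}^2(T)$. For the second, the decisive point is that $\widehat R_T$ is an annulus of \emph{bounded} width at graph-distance $\approx\ell$ from $T$: it contains $\mathcal O(\ell^{d-1})$ elements and, symmetrically, a fixed element belongs to $\widehat R_T$ for only $\mathcal O(\ell^{d-1})$ indices $T$, so $\sum_T\|\nabla e\|_{\widehat R_T}^2\lesssim\ell^{d-1}\|\nabla e\|_\Omega^2$. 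Dividing by $\|\nabla e\|_\Omega$ gives the bound for $v\in\tilde V_H$, and applying it to $\mathcal Rv$ yields \cref{eq:locerrR}. The main obstacle is precisely this confinement of $a(z_T,e)$ to a single annulus — a bound in terms of $\|\nabla e\|_{\mathsf{N}^\ell(T)}$ would only give $\ell^{d/2}$ — and within it the delicate point is the Lagrange-multiplier term, handled by the in/out splitting of $\lambda_T^\ell$ combined with $e\in W$ (killing the interior part) and by \cref{eq:pctype} (removing every $H^{-1}$ prefactor, which is also why the naive $H^{-1}$ blow-up mentioned in the statement does not occur).
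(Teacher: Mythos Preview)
Your proposal is correct and follows essentially the same route as the paper's proof: both show $e\in W$, split $a(e,e)$ into per-element contributions that are rewritten via a cut-off and \cref{eq:defKTell1} into a Lagrange-multiplier term plus an annular $a$-term, handle the multiplier by the in/out splitting of $\lambda_T^\ell$ together with \cref{lem:invineq}, apply \cref{thm:dec} on the patch to extract the exponential factor, bound $\|\nabla\mathcal K_T^\ell v\|$ by saddle-point stability, and conclude with the $\mathcal O(\ell^{d-1})$ annulus overlap count. The only visible differences are cosmetic: your cut-off is $1-\eta_T$ of the paper's, and you make the reduction to $v\in\tilde V_H$ explicit (the paper's final line ``dividing by $\|\nabla e\|_\Omega$ gives the assertion'' tacitly uses exactly this, since its estimate \cref{eq:boundKTl} produces $\|\nabla v\|_{\mathsf N(T)}$ rather than $\|\nabla\mathcal R v\|$). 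One small point: your parenthetical ``test \cref{eq:defKTell} with $\mathcal K_T^\ell v$'' does not by itself yield the stability bound, because the $b(\mathcal K_T^\ell v,\lambda_T^\ell)=-c_T(v,\lambda_T^\ell)$ term still carries the unknown multiplier; you need the full saddle-point stability with the inf--sup constant of order $H$, exactly as the paper invokes in \cref{eq:boundKTl}.
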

\begin{proof} We suppose without loss of generality that $\ell \ge 2$, observing that the case $\ell = 1$ holds by scaling.
We abbreviate the localization error by \( e \coloneqq (\mathcal{R} - \mathcal{R}^\ell)v \) and note that \( e \in W \). Indeed, by \cref{defK,eq:defRl,eq:defKT2,eq:defKTell2} we have $b(e,\mu)=-\sum_{T\in\TH}c_T(\mathcal{K}_Tv-\mathcal{K}_T^\ell v,\mu)=0$ for all $\mu\in M_H$. Thus,  
\begin{equation}
\begin{aligned}\label{eq:spliterror}
	\alpha\|\nabla (\mathcal R - \mathcal R^\ell)v\|_\Omega^2 
    \leq -a(\mathcal R^\ell v,e) 
    =  \sum_{T \in \mathcal{T}_H} \Big(-a_T(\mathcal I_Hv,e) + a(\mathcal K_T^\ell v,e)\Big).
\end{aligned}
\end{equation}
In the following, we consider each summand on the right-hand side separately. We use the cut-off function from \cref{eq:eta} with $S=T$, now denoted by \( \eta_T \). 
Note that \( \eta_T =0 \) on \( T \), so that $a_T(\mathcal{I}_H v, e) = a_T(\mathcal{I}_H v, (1 - \eta_T) e).$ Together with \cref{eq:defKTell1} for the test function \(w =  (1 - \eta_T)e \in V_T^\ell \), this yields
\begin{align*}
	-a_T(\mathcal I_Hv,e)+a(\mathcal K_T^\ell v,e) &= -a_T(\mathcal I_Hv,(1-\eta_T)e)+a(\mathcal K_T^\ell v,(1-\eta_T)e + \eta_T e)\\
	&= - b((1-\eta_T) e,\lambda_T^\ell) +a(\mathcal K_T^\ell v,\eta_T e)\eqqcolon \Xi_1+\Xi_2.
\end{align*}
To estimate the term $\Xi_1$,   we decompose~\( \lambda^\ell_T = \lambda^\mathrm{in}_T + \lambda^\mathrm{out}_T \), where \( \lambda^\mathrm{in}_T \) is the linear combination of basis functions of \( M_H \) fully supported in \( \mathsf{N}^{\ell-1}(T) \), and \( \lambda^\mathrm{out}_T \) is the combination of the remaining basis functions. Specifically, for the CG-LOD, the basis functions forming \( \lambda^\mathrm{out}_T \) are associated with Lagrange points that do not lie in \( \operatorname{int}(\mathsf{N}^{\ell-1}(T)) \). For DG-LOD, we simply choose \( \lambda^\mathrm{out}_T = \lambda^\ell_T|_{\Omega\setminus\Nb^{\ell-1}(T)}\). 
Since \( \eta  = 0 \) in  \( \mathsf{N}^{\ell-1}(S) \), it follows from \eqref{eq:defKTell2} that \( b((1-\eta_T)e, \lambda^\mathrm{in}) = b(e, \lambda^\mathrm{in}) = 0 \). Thus,  introducing  $R_T^2 \coloneqq \operatorname{int}\big( \Nb^{\ell}(T) \setminus \Nb^{\ell - 2}(T) \big)$, we obtain that
\begin{align*}
	\Xi_1 = -b((1-\eta_T)e,\lambda_T^\mathrm{out}) 
    \le \int_{R_T^2} |e\lambda_T^\mathrm{out}| \dx
    \lesssim \|e\|_{R_T^2} \|\lambda_T^\ell\|_{R_T^2}.
\end{align*}
To estimate the norm of $\lambda_T^\ell$, we employ  \cref{lem:invineq} on each mesh element $K\subset \overline{R_T^2}$, resulting in a negative power of $H$ and the $H^{-1}(K)$-norms of $\lambda_T^\ell$. We now take any $w\in H^1_0(K)$ with $\|\nabla w\|_{K}=1$ as a test function in \cref{eq:defKTell1} and note that $a_T(\mathcal I_H v,w) = 0$ since $T \cap R_T^2 = \emptyset$ for $\ell\ge 2$.  Thus, 
\begin{align*}
	|(\lambda_T^\ell,w)_K| = |a(\mathcal K_T^\ell v,w)| \leq \beta \|\nabla \mathcal K_T^\ell v\|_{K} ,
\end{align*}
yielding $\|\lambda_T^\ell\|_{K}\lesssim \|\lambda_T^\ell\|_{-1,K} \leq \beta H^{-1} \|\nabla \mathcal K_T^\ell v\|_{K}$. 
Recalling that $\|e\|_{R_T^2}\lesssim H \|\nabla e\|_{R_T^2}$ by the Poincaré-type inequality~\eqref{eq:pctype} (or the usual Poincaré inequality for the DG-LOD), 
and substituting this into the estimate for $\Xi_1$, we obtain that
\begin{equation}
	\Xi_1 \lesssim  \|\nabla  \mathcal K_T^\ell v\|_{R_T^2}\|\nabla e\|_{R_T^2}.
\end{equation}
Turning to $\Xi_2$, we remark that the contributions to this term also vanish on all the mesh elements outside $R_T^2$ since $\ell\geq 2$.  As above, we arrive at 
\begin{align*}
	\Xi_2 \lesssim  \|\nabla \mathcal K_T^\ell v\|_{R_T^2}(\|\nabla e\|_{R_T^2}+ \|e\|_{R_T^2})
        \lesssim \|\nabla \mathcal K_T^\ell v\|_{R_T^2}\|\nabla e\|_{R_T^2}.
\end{align*}
Putting the above estimates together, we obtain that
\begin{equation}\label{eq:bounde}
\begin{aligned}
	\|\nabla e\|_\Omega^2 &\lesssim  \sum_{T \in \TH} \|\nabla \mathcal K_T^\ell v\|_{R_T^2}\|\nabla e\|_{R_T^2} \lesssim  \exp(-C_\mathrm{dec}\ell) \sum_{T \in \TH}\|\nabla \mathcal K_T^\ell v\|_{\Nb^{\ell}(T)}\|\nabla e\|_{R_T^2},
\end{aligned}
\end{equation}
where we applied \cref{thm:dec} to \cref{eq:defKTell}, treating the patch $\mathsf N^\ell(T)$ as the whole domain.
In order to pass from the norm of $\mathcal K_T^\ell$ to that of $v$, we again use classical saddle point theory (see, e.g.~\cite[Cor.~4.2.1]{BofBF13}), 
recalling that the inf--sup constant of $b$ is of order $H$ as outlined in \cref{infsupbglob}.  This results in
\begin{equation}\label{eq:boundKTl}
\begin{aligned}
   \| \nabla \mathcal{K}_T^{\ell} v\|_{\Nb^{\ell} (T)} & \lesssim
  \sup_{w \in V_T^\ell}\frac{a_T (\mathcal{I}_H v, w)}{\| \nabla w
  \|_{\Omega}} + H^{-1} \sup_{\mu \in M_T^\ell} \frac{c_T (v,
  \mu)}{\| \mu \|_{\Omega}}\\
  & \lesssim  \| \nabla v\|_{\Nb (T)}  + H^{- 1} \|v - \bar{v}_T \|_{\Nb
  (T)} \lesssim  \| \nabla v\|_{\Nb (T)}
\end{aligned}
\end{equation}
where $\bar{v}_T$ is the average of $v$ over $\Nb (T)$ if $T$ contains a boundary node, and $\bar{v}_T=0$ otherwise. The justification of the last inequality in~\eqref{eq:boundKTl} requires to consider the cases where $T$ is completely inside $\Omega$ and $T$ is adjacent to the boundary separately. In the first case, we can subtract the average $\bar{v}_T$ inside the bilinear form $c_T$ due to \cref{cond2}
on page \pageref{propbtilde}, and then proceed by using \cref{cond4} and the Poincaré inequality on~$\Nb(T)$. Note that the constant in the Poincaré inequality is of order $H$, as seen from scaling and maximizing over all possible configurations of $\Nb(T)$ allowed by the mesh regularity. In the other case where $T$ lies at the boundary $\partial\Omega$, at least one of the boundary faces of $\Nb(T)$ lies on $\partial\Omega$ and we can conclude by the Friedrichs-type inequality $\|v\|_{\Nb  (T)} \lesssim  H \| \nabla v\|_{\Nb (T)}$,  which holds since $v$ vanishes on $\partial\Omega$. 

Returning to~\eqref{eq:bounde}, we conclude that
\begin{align*}
    \|\nabla e\|_\Omega^2&\lesssim  \exp(-C_\mathrm{dec}\ell) \sqrt{\sum_{T \in \TH} \|\nabla v\|_{\Nb(T)}^2}\sqrt{\sum_{T \in \TH} \|\nabla e\|_{R_T^2}} \\
    & \lesssim  \ell^{(d-1)/2}\exp(-C_\mathrm{dec}\ell) \|\nabla v\|_\Omega \|\nabla e\|_\Omega, 
\end{align*}
where we have used the fact that each element $K\in\TH$ belongs to at most $\mathcal O(\ell^{d-1})$ rings $R_T^2$ for different $T\in\TH$. Dividing by $\|\nabla e\|_\Omega$ gives the assertion.
\end{proof}

\section{Localized multiscale method}
\label{sec:locmethod}

In this section, we introduce a practical multiscale method with locally computable basis functions. This is reasonable due to the exponential decay behavior of the globally defined functions. We define the localized multiscale  space as $\tilde V_H^\ell=\mathcal{R}^{\ell}V$. Using that the operator \( \mathcal{R}^\ell \) depends on its argument only through the~QOI introduced in \cref{defQOI}, allows us to write 
\begin{equation}
	\label{eq:approxspace}
	\tilde V_H^\ell \coloneqq \operatorname{span}\{\tilde \varphi_j^\ell \with j = 1,\dots, J\},
\end{equation}
with appropriate basis functions associated with the QOI by the property that $q_i(\tilde{\varphi}_j^\ell)=\delta_{ij}$.
Formally, these basis functions can be written as $\tilde{\varphi}_j^\ell = \mathcal{R}^{\ell} \tilde{\varphi}_j$. This property allows us to practically compute \( \tilde{\varphi}_j^\ell \) as a linear combination of the solution to a few local problems as explained in the following.
To this end, we compute the coefficients \( \kappa_{zj} \) for $z \in \mathcal{N}_H^{\mathrm{in}}$ and $j=1,\ldots,J$ such that  $\mathcal{I}_H \tilde{\varphi}_j =\sum_{z \in \mathcal{N}_H^{\mathrm{in}}} \kappa_{zj} \Lambda_z^1$, where $\mathcal{N}_H^{\mathrm{in}}$ denotes the set of interior nodes. These coefficients are easily deduced from the definitions in~\cref{defIHcg,defIHdg} for the CG-LOD and DG-LOD, respectively, and the known QOI of $\tilde{\varphi}_j$. We can then write the localized basis function \( \tilde{\varphi}_j^\ell \) as
\begin{equation} 
	\label{eq:locbasis}
    \tilde{\varphi}_j^\ell = \sum_{z \in \mathcal{N}_H^{\mathrm{in}}} \kappa_{zj} \Lambda_z^1 - \sum_{T \in \mathcal{T}_H} \psi_{j,T}^\ell,
\end{equation}
where the pair \( (\psi_{j,T}^\ell, \lambda_{j,T}^\ell) \in V_T^\ell \times M_T^\ell \) solves the local saddle point problem
\begin{subequations}
	\label{eq:psiTell} 
	\begin{align}
		&a (\psi_{j, T}^{\ell}, w) \;  +\hspace{-1.75ex}&  b (w,\lambda_{j, T}^{\ell}) &=  {\textstyle \sum_{z \in  \mathcal{N}_H^{\mathrm{in}}} \kappa_{zj} a_T (\Lambda_z^1, w),}&&\forall w \in V_T^\ell, \label{eq:psiTell1}\\
		&b(\psi_{j,T}^\ell,\mu) \;   &         &= {\textstyle-\frac{|T\cap\omega_j|}{|\omega_j|} \mu_j +  \sum_{z \in \mathcal{N}_H^{\mathrm{in}}} \kappa_{zj}(\mu, \Lambda_z^1)_T,}&&\forall \mu \in M_T^\ell. \label{eq:psiTell2}
	\end{align}
\end{subequations}
Note that only a small number of these problems have to be solved for each $j$. In the case of CG-LOD, the functions $\psi_{j, T}^\ell$ are non-zero only for mesh elements $T\subset\overline\omega_j$. For the DG-LOD, if the index \( j \) is associated with the characteristic function on an element $K$, then the functions \( \psi_{j, T}^\ell \) are non-zero for elements \( T \subset \mathsf{N}(K) \); for all  other indices $j$ corresponding to  basis functions of $M_H$ supported on \( K \), we have that $\tilde{\varphi}_j^\ell=-\psi_{j,K}^{\ell}$. We emphasize that, compared to naive localization strategies as in~\cite{MalP14,Maier2021}, the computational cost of the proposed stabilized localization strategy differs only for the basis functions associated with the lowest-order QOI. Specifically, these are the hat functions for the CG-LOD and the characteristic functions of elements for the DG-LOD. 

The LOD method seeks the unique function $\tilde u_H\in \tilde V_H^\ell$ such that
\begin{align}\label{eq:locmethod}
	a(\tilde u_H^\ell,\tilde v_H^\ell)=(f,\tilde v_H^\ell)_\Omega, 
    \quad \forall \tilde v_H^\ell \in \tilde V_H^\ell.
\end{align}
The following theorem provides convergence results for the LOD approximation.
\begin{theorem}[Localized  method]\label{thm:convergenceloc}
The localized multiscale method~\cref{eq:locmethod} is well-posed. Moreover, for any right-hand side \( f \in H^{s}(\Omega) \) in the case of the CG-LOD, and \( f \in H^s(\mathcal{T}_H) \) for the DG-LOD, with \( s \in \{0, 1, \dots, p+1\} \), we have the following error estimates:
	\begin{align}
		\| \nabla (u - \tilde{u}_H^{\ell})\|_{\Omega} & \lesssim  H^{1+s}|f|_{s,\TH} +  \ell^{(d-1)/ 2} \exp (- C_\mathrm{dec} \ell) \|f\|_{\Omega},  \label{eq:errestpracH1}\\
\|u - \tilde{u}_H^{\ell} \|_{\Omega} & \lesssim \big(H  +\ell^{(d-1)/ 2} \exp (- C_\mathrm{dec} \ell)\big)\| \nabla (u - \tilde{u}_H^{\ell})\|_{\Omega}.  \label{eq:errestpracL2}
	\end{align}
\end{theorem}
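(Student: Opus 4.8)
The plan is to prove this as a standard Strang-type perturbation argument, combining C\'ea's lemma for the Galerkin method on $\tilde V_H^\ell$ with the localization error estimate from \cref{thm:locerr} and the a~priori estimate for the prototypical method from \cref{thm:convergenceprot}. First I would establish well-posedness: since $\tilde V_H^\ell = \mathcal R^\ell V$ is finite-dimensional and $a$ is coercive on $V$ (hence on any subspace), \cref{eq:locmethod} is well-posed by Lax--Milgram, provided $\tilde V_H^\ell$ has the expected dimension $J$; this follows because the QOI $q_i$ separate the basis functions $\tilde\varphi_j^\ell$ via $q_i(\tilde\varphi_j^\ell)=\delta_{ij}$, so the $\tilde\varphi_j^\ell$ are linearly independent.

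For the $H^1$-estimate \cref{eq:errestpracH1}, I would use C\'ea's lemma to reduce to a best-approximation error, $\|\nabla(u-\tilde u_H^\ell)\|_\Omega \lesssim \inf_{\tilde v_H^\ell \in \tilde V_H^\ell}\|\nabla(u - \tilde v_H^\ell)\|_\Omega$, and then choose the competitor $\tilde v_H^\ell = \mathcal R^\ell \tilde u_H = \mathcal R^\ell \mathcal R u$ (recall $\tilde u_H = \mathcal R u$ from \cref{thm:convergenceprot}). Then split
\begin{equation*}
\|\nabla(u - \mathcal R^\ell \mathcal R u)\|_\Omega \le \|\nabla(u - \mathcal R u)\|_\Omega + \|\nabla(\mathcal R - \mathcal R^\ell)\mathcal R u\|_\Omega.
\end{equation*}
The first term is bounded by $H^{1+s}|f|_{s,\TH}$ via \cref{thm:convergenceprot} (with $s=0$ needed only when $f\in L^2$, but the full range is available). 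For the second term, \cref{thm:locerr} applied with $v = \mathcal R u$ gives $\|\nabla(\mathcal R-\mathcal R^\ell)\mathcal R u\|_\Omega \lesssim \ell^{(d-1)/2}\exp(-C_{\mathrm{dec}}\ell)\|\nabla \mathcal R\mathcal R u\|_\Omega = \ell^{(d-1)/2}\exp(-C_{\mathrm{dec}}\ell)\|\nabla \tilde u_H\|_\Omega$, since $\mathcal R$ is an $a$-orthogonal projection so $\mathcal R\mathcal R u = \mathcal R u$. Finally $\|\nabla \tilde u_H\|_\Omega = \|\nabla \mathcal R u\|_\Omega \le \|\nabla u\|_\Omega \lesssim \|f\|_\Omega$ by the stability estimate from \cref{sec:modelproblem}, giving \cref{eq:errestpracH1}.

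For the $L^2$-estimate \cref{eq:errestpracL2}, I would use an Aubin--Nitsche duality argument. Let $z\in V$ solve $a(w,z) = (u-\tilde u_H^\ell, w)_\Omega$ for all $w\in V$; then $\|u-\tilde u_H^\ell\|_\Omega^2 = a(u-\tilde u_H^\ell, z) = a(u - \tilde u_H^\ell, z - \tilde z_H^\ell)$ for any $\tilde z_H^\ell \in \tilde V_H^\ell$, using Galerkin orthogonality $a(u-\tilde u_H^\ell,\tilde v_H^\ell)=0$ (which holds because $a(u,\tilde v_H^\ell)=(f,\tilde v_H^\ell)_\Omega=a(\tilde u_H^\ell,\tilde v_H^\ell)$). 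Choosing $\tilde z_H^\ell = \mathcal R^\ell \mathcal R z$ and arguing exactly as for the primal problem, $\|\nabla(z-\mathcal R^\ell\mathcal R z)\|_\Omega \lesssim (H + \ell^{(d-1)/2}\exp(-C_{\mathrm{dec}}\ell))\|u-\tilde u_H^\ell\|_\Omega$, where the $H$ comes from the $L^2$-part of \cref{thm:convergenceprot} applied to the dual problem with $s=0$ (its right-hand side being $u-\tilde u_H^\ell\in L^2$, so $|u-\tilde u_H^\ell|_{0,\TH} = \|u-\tilde u_H^\ell\|_\Omega$, and the $H^2$ from the $L^2$-estimate combines with one $H^{-1}$... ) — here I need the estimate at the level $\|\nabla(\cdot)\|$, which is $H^{1+0}\|u-\tilde u_H^\ell\|_\Omega$. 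Then Cauchy--Schwarz gives $\|u-\tilde u_H^\ell\|_\Omega^2 \lesssim \|\nabla(u-\tilde u_H^\ell)\|_\Omega \cdot (H + \ell^{(d-1)/2}\exp(-C_{\mathrm{dec}}\ell))\|u-\tilde u_H^\ell\|_\Omega$; dividing yields \cref{eq:errestpracL2}.

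The main obstacle I anticipate is the bookkeeping in the duality step: one must verify that the competitor $\mathcal R^\ell\mathcal R z$ indeed lies in $\tilde V_H^\ell$ (it does, as $\mathcal R^\ell$ maps into $\tilde V_H^\ell$ by construction), and carefully track that the prototypical $L^2$-estimate contributes the correct power of $H$ after the implicit cancellation of one factor $H^{-1}$ against the gradient — equivalently, one uses that the prototypical method satisfies $\|\nabla(w - \mathcal R w)\|_\Omega \lesssim H\|g\|_\Omega$ when $g\in L^2$ is the data of the problem solved by $w$, which is precisely the $s=0$ case of \cref{eq:errestL2H1}. A minor additional point is ensuring the exponential/algebraic factor $\ell^{(d-1)/2}\exp(-C_{\mathrm{dec}}\ell)$ is treated as $\lesssim 1$ wherever it multiplies an already-small quantity, so that it can be absorbed without deteriorating the rate; this is routine since that factor is bounded uniformly in $\ell$.
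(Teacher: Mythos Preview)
Your proposal is correct and follows exactly the standard C\'ea-plus-localization and Aubin--Nitsche route; the paper itself omits the proof entirely, deferring to~\cite[Thm.~6.2]{Dong2023}, which proceeds along the same lines. One small simplification: since $\mathcal R^\ell$ depends on its argument only through the QOI (by the properties of $\mathcal I_H$ and $c_T$), you have $\mathcal R^\ell u = \mathcal R^\ell \mathcal R u$ automatically, so there is no need to compose with $\mathcal R$ when choosing the competitor; and well-posedness does not require $\dim \tilde V_H^\ell = J$, only that $\tilde V_H^\ell$ is a closed subspace.
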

\begin{proof}
The proof of the above error estimates closely follows the arguments in \cite[Thm.~6.2]{Dong2023} and is therefore omitted for the sake of brevity.
\end{proof}

\section{More general problems}\label{sec:general}
This section demonstrates that the general higher-order LOD framework developed in the previous sections for the elliptic model problem can be readily extended to more complex problems. This is illustrated with the examples of the heterogeneous Helmholtz problem and the Gross–Pitaevskii eigenvalue problem.

\subsection{Heterogeneous Helmholtz problem}\label{sec:Helmho}
In comparison with the second-order diffusion-type problems introduced in \cref{sec:modelproblem}, the Helmholtz problem additionally features a zeroth-order term with a sign opposite to that of the second-order operator. The strength of this term is determined by the wavenumber \(\kappa > 0\). For large \(\kappa\), the problem becomes strongly indefinite and its solution is oscillatory, which typically results in significant numerical challenges, cf.~\cite{Babuka1997}.
The Helmholtz-type problem considered here seeks a complex-valued solution \( u : \Omega \to \mathbb{C} \) satisfying
\begin{equation}\label{eq:Helmholtz}
	\left\{
	\begin{aligned}
		- \mathrm{div}(A \nabla u) - \kappa^2 \calV^2 u &= f 
		&& \text{in } \Omega, \\
		A \nabla u \cdot \nu - i \kappa \sigma u &= 0
		&& \text{on } \partial \Omega,
	\end{aligned}
	\right.
\end{equation}
where \( A \) is a matrix-valued coefficient as in \cref{sec:modelproblem}, \( \calV \in L^\infty(\Omega; \mathbb{R}) \) satisfies the uniform bounds $0 < \calV_{\mathrm{min}} \leq \calV(x) \leq \calV_{\mathrm{max}} < \infty$ almost everywhere in $\Omega$, and \( \sigma \in L^\infty(\partial \Omega; \mathbb{R}) \) is almost everywhere positive on \( \partial \Omega \). The right-hand side~\( f \) belongs to \( L^2(\Omega; \mathbb{C}) \). This problem describes the acoustic wave propagation in heterogeneous media and is a generalization of the classical (homogeneous) Helmholtz problem. 

The weak formulation of~\eqref{eq:Helmholtz} seeks a function \( u \in V \coloneqq H^1(\Omega;\mathbb{C}) \) such that
\begin{equation}
	\label{eq:wfhelmholtz}
	a(u, v) = \int_\Omega f \bar{v} \, \mathrm{d}x, \quad \forall v \in V,
\end{equation}
where \( \bar{v} \) is the complex conjugate of \( v \), and \( a \colon V \times V \to \mathbb{C} \) is defined by
\begin{align*}
	a(u, v) 
	\coloneqq \int_\Omega A \nabla u \cdot \nabla \bar{v} \, \mathrm{d}x
	- \kappa^2 \int_\Omega \calV^2 u \bar{v} \, \mathrm{d}x
	- i \kappa \int_{\partial \Omega} \sigma u \bar{v} \, \mathrm{d}s.
\end{align*}
Note that $a$ is sesquilinear. 
A natural norm for the Helmholtz problem is
\begin{equation}
	\label{eq:hhnorm}
	\|v\|_\kappa^2 \coloneqq \|A^{1/2} \nabla v\|_{\Omega}^2 + \kappa^2 \|\calV v\|_{\Omega}^2.
\end{equation}
Henceforth, we assume that the weak formulation~\cref{eq:wfhelmholtz} of the Helmholtz problem is well-posed and admits a unique solution that satisfies the stability estimate
\begin{equation}
	\label{eq:stabhelmholtz}
	\|u\|_\kappa \lesssim \kappa^n \|f\|_{\Omega},
\end{equation}
for some $n\geq 0$. For homogeneous coefficients and general Lipschitz domains, the well-posedness of~\cref{eq:stabhelmholtz} can be proved with $n =  5/2$; see~\cite{Esterhazy2011}. In the case of variable coefficients, the analysis becomes substantially more involved; see, e.g.,~\cite{Graham2019,ChaumontFrelet2023}. Note that the polynomial-in-$\kappa$ stability assumption \cref{eq:stabhelmholtz}  is  classical  for the numerical analysis of  Helmholtz problems. 

Several works use and analyze the LOD method for Helmholtz problems, including \cite{Peterseim2016,Brown2017,Peterseim2020,MaiV22,Hauck2022}; see also \cite{Freese2024}. A key ingredient in the corresponding  analysis is the coercivity of the sesquilinear form \( a \) when restricted to the fine-scale space \( W \), defined similarly to \cref{eq:defW}. The following lemma establishes this property for both the proposed CG-LOD and DG-LOD method.

\begin{lemma}[Coercivity on fine-scale space]\label{helm:LemCoer}
Assume the resolution condition $
	{H \kappa}{p^{-1}} \leq {\sqrt{\alpha}}(\sqrt{2}\, V_{\max} C_\mathrm{ap})^{-1}
$ with a constant \( C_\mathrm{ap} > 0 \) depending  solely on the shape-regularity of \( \mathcal{T}_H \). Then the sesquilinear form \( a \) is coercive on \( W \times W \), i.e.,
\begin{equation*}
	\mathfrak{R} a(w, w) \geq \frac{\alpha}{2} \| \nabla w \|_{\Omega}^2,
	\quad \forall w \in W.
\end{equation*}
\end{lemma}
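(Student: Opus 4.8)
The plan is to bound below the real part of $a(w,w)$ for $w\in W$ by splitting off the two negative contributions and controlling each of them via the defining property of $W$, namely $(\mu,w)_\Omega=0$ for all $\mu\in M_H$. Writing out
\[
\mathfrak{R}\,a(w,w) = \|A^{1/2}\nabla w\|_\Omega^2 - \kappa^2\|\calV w\|_\Omega^2,
\]
since the boundary term $-i\kappa\int_{\partial\Omega}\sigma|w|^2\,\mathrm{d}s$ is purely imaginary and drops out of the real part. The first term is $\ge \alpha\|\nabla w\|_\Omega^2$ by the lower bound in \cref{eq:propA}, so it suffices to show $\kappa^2\|\calV w\|_\Omega^2 \le \tfrac{\alpha}{2}\|\nabla w\|_\Omega^2$, i.e.\ $\kappa^2 V_{\max}^2\|w\|_\Omega^2 \le \tfrac{\alpha}{2}\|\nabla w\|_\Omega^2$.

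The key step is an $L^2$-approximation estimate for functions in $W$. Since $w\in W$ is $L^2$-orthogonal to all of $M_H$, in particular to the $L^2$-projection $\Pi_H w$ of $w$ onto $M_H$ (piecewise polynomials of degree $p$, with or without global continuity), one gets the Pythagorean identity $\|w\|_\Omega^2 = \|w-\Pi_H w\|_\Omega^2 + \|\Pi_H w\|_\Omega^2 \ge$ nothing useful directly — instead use $\|w\|_\Omega = \|w-\Pi_H w\|_\Omega$ because $\Pi_H w = 0$? That is not true in general. Rather, the right move is: $w$ is orthogonal to $M_H\supset \mathcal{P}^0(\TH)$ (the piecewise constants, and more), hence $\|w\|_\Omega \le \|w - \mu\|_\Omega$ for every $\mu\in M_H$ is false too; what is true is $\|w\|_\Omega = \min_{\mu\in M_H}\|w-\mu\|_\Omega$ is false as well. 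The correct statement is simply that $w\perp M_H$ forces $\|w\|_\Omega \le \|(I-\Pi_H)w\|_\Omega$ with equality, and $\|(I-\Pi_H)w\|_\Omega$ is controlled elementwise by the standard polynomial approximation estimate $\|v-\Pi_{H,T}v\|_T \le C_\mathrm{ap}\,H\,p^{-1}\|\nabla v\|_T$ valid on each simplex $T$ (the $p^{-1}$ scaling being the classical $hp$-approximation gain), summed over $T\in\TH$. Thus
\[
\|w\|_\Omega = \|w-\Pi_H w\|_\Omega \le C_\mathrm{ap}\,\frac{H}{p}\,\|\nabla w\|_\Omega
\quad\text{for all } w\in W,
\]
using $\Pi_H w$ as the admissible competitor and that $w\perp\Pi_H w$ in $L^2$.

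Plugging this in gives $\kappa^2 V_{\max}^2\|w\|_\Omega^2 \le \kappa^2 V_{\max}^2 C_\mathrm{ap}^2 H^2 p^{-2}\|\nabla w\|_\Omega^2$, and the resolution condition $H\kappa p^{-1}\le \sqrt{\alpha}(\sqrt 2\,V_{\max}C_\mathrm{ap})^{-1}$ is exactly what is needed to make the prefactor at most $\alpha/2$. Combining with the ellipticity lower bound yields $\mathfrak{R}\,a(w,w)\ge \alpha\|\nabla w\|_\Omega^2 - \tfrac\alpha2\|\nabla w\|_\Omega^2 = \tfrac\alpha2\|\nabla w\|_\Omega^2$, as claimed. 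The main obstacle — really the only subtle point — is justifying the $L^2$-orthogonality argument cleanly and making sure the $hp$-approximation constant $C_\mathrm{ap}$ genuinely depends only on shape-regularity (not on $H$ or $p$): this is standard for the $L^2$-projection onto $\mathcal{P}^p$ on shape-regular simplices, and for the CG case one uses that the continuous $L^2$-projection enjoys the same elementwise bound up to a constant, or alternatively that $\Pi_H w$ can be replaced by a Scott–Zhang-type quasi-interpolant lying in $M_H$ without affecting the orthogonality used (since we only need $\|w\|_\Omega \le \|w-\mu\|_\Omega$ for some $\mu\in M_H$, which follows from $w\perp M_H$ by taking $\mu$ to be the $L^2$-projection and using Pythagoras).
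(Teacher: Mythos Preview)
Your proof is correct and follows the same approach as the paper: bound $\|w\|_\Omega$ by $C_{\mathrm{ap}}\,H p^{-1}\|\nabla w\|_\Omega$ via the $L^2$-projection onto $M_H$, then invoke the resolution condition. Your mid-argument hesitation is unnecessary: for $w\in W$ one has $\Pi_H w = 0$ outright (since $(w,\mu)_\Omega=0$ for all $\mu\in M_H$ forces $(\Pi_H w,\mu)_\Omega=0$, hence $\Pi_H w=0$), which is exactly what the paper uses and makes the Pythagoras detour superfluous.
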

\begin{proof}
We use the uniform bounds on the coefficients \( A \) and \( \calV \), and the approximation property of the \( L^2 \)-projection \( \Pi_H \colon L^2(\Omega) \to M_H \), given by
\[
\|v - \Pi_H v\|_{\Omega} \leq  {C_\mathrm{ap}H}{p}^{-1} \|\nabla v\|_{\Omega},
\]
for a constant \( C_\mathrm{ap} > 0 \) independent of \( H \) and \( p \). Recall that we choose $M_H = \mathcal P^p(\TH)$ for the DG-LOD and $M_H = \mathcal P^p(\TH)\cap H^1(\Omega)$ for the DG-LOD. Such a $p$-explicit approximation result can be found, for instance, in \cite[Sec.~3]{Babuka1987}. 
 This approximation result then yields, for any $w \in W$, the following estimate:
\begin{align*}
	\mathfrak{R} a(w, w)
	\geq \alpha \|\nabla w\|_{\Omega}^2 - \kappa^2 \calV_{\max}^2 \|w\|_{\Omega}^2 
	\geq \big( \alpha - \kappa^2 \calV_{\max}^2 C_\mathrm{ap}^2 {H^2}{p^{-2}} \big) \|\nabla w\|_{\Omega}^2.
\end{align*}
	The desired coercivity estimate then follows with the resolution condition.
\end{proof}

 Note that, unlike in the rest of the paper, we have now explicitly tracked the $p$-dependence to examine how the polynomial degree influences the resolution condition and to enable a comparison with other methods; see \cref{rem:hpfem}.

To construct the basis functions of the LOD trial space, one solves local corrector problems analogous to~\cref{eq:psiTell}, where the sesquilinear forms \( a \) and \( a_T \) now are those of the Helmholtz problem. 
The well-posedness of these problems follows directly from classical inf--sup theory (see, e.g., \cite[Cor.~4.2.1]{BofBF13}, which extends to the complex-valued setting), noting that the sesquilinear form \( a \) is coercive on the kernel of \( b \) as shown in \cref{helm:LemCoer} and the inf--sup condition \cref{infsupbglob}.
 The global trial space of the LOD is then defined as the span of the resulting basis functions, in the spirit of~\cref{eq:approxspace,eq:locbasis}. 
Since the Helmholtz problem is non-Hermitian, the LOD method employs different trial and test spaces.
 However, owing to the specific structure of the Helmholtz equation, the test space can  be obtained as the complex conjugate of the trial space; see~\cite[Eq.~(4.6)]{Peterseim2016}. 
Therefore, no additional computations for the test space basis functions are required, and the LOD approximation to the Helmholtz problem is given by the solution \( \tilde{u}_h^\ell \in \tilde{V}_H^\ell \) satisfying
\begin{equation*}
		a(\tilde u_H^\ell,\overline{\tilde v_H^\ell})=\int_\Omega f\tilde v_H^\ell\dx,    \quad \forall \tilde v_H^\ell \in \tilde V_H^\ell.
\end{equation*}

The following theorem establishes the convergence of the CG-LOD and DG-LOD methods for the Helmholtz problem.
 
\begin{theorem}[Localized method for Helmholtz]
	\label{thm:helmh}
	Under the resolution condition from \cref{helm:LemCoer} and the oversampling condition \(\ell \gtrsim \log(\kappa)\), the LOD method for~\cref{eq:wfhelmholtz} is well-posed. 
	Moreover, with \( f \in H^{s}(\Omega,\mathbb C) \) for the CG-LOD, and \( f \in H^s(\TH; \mathbb{C}) \) for the DG-LOD, and \( s \in \{0, 1, \dots, p+1\} \), we have
	\begin{align}
		\|u - \tilde{u}_H^{\ell}\|_{\kappa} \lesssim H^{1+s} |f|_{s,\TH} + \kappa^n \ell^{\frac{d-1}{2}} \exp(-C_\mathrm{dec} \ell) \|f\|_{\Omega} \label{eq:errestpracH1helmholtz}
	\end{align}
	with  \(n\) from \cref{eq:stabhelmholtz} and the decay rate \(C_\mathrm{dec} > 0\) is independent of \(H\), \(\ell\), and \(\kappa\).
\end{theorem}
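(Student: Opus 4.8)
\textbf{Proof plan for \cref{thm:helmh}.} The strategy is the standard ``Schatz-type argument'' for indefinite problems, combined with the localization error from \cref{thm:locerr} adapted to the Helmholtz sesquilinear form. First I would establish well-posedness of the localized Helmholtz LOD: since $a$ is coercive on $W\times W$ by \cref{helm:LemCoer} and the inf--sup condition \cref{infsupbglob} holds, the underlying saddle-point operator $\mathcal{R}^\ell$ (now built with the Helmholtz form) is well-defined, and the associated ideal projection $\mathcal{R}$ reproduces the exact solution in the sense that $a(u-\mathcal{R}u,\tilde v_H)=0$ for all $\tilde v_H\in\tilde V_H$. The key quasi-optimality statement for the \emph{ideal} method is that $\|u-\mathcal{R}u\|_\kappa\lesssim H^{1+s}|f|_{s,\TH}$; this follows exactly as in \cref{thm:convergenceprot} because $u-\mathcal{R}u\in W$ and on $W$ the Helmholtz form behaves like the elliptic one (by coercivity on $W$), so the duality/Aubin--Nitsche argument of \cite{Maier2021} transfers verbatim, with the resolution condition absorbing the $\kappa^2\calV^2$ term.

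Next I would handle the localization perturbation. The difference $\tilde u_H^\ell-\mathcal{R}u$ lives in the perturbed space $\tilde V_H^\ell$, and the obstruction to quasi-optimality is the discrete inf--sup (Babu\v{s}ka--Brezzi) stability of $a$ on $\tilde V_H^\ell\times\overline{\tilde V_H^\ell}$. The plan is to show that this discrete inf--sup constant is bounded below by a $\kappa$-independent constant \emph{provided} $\ell\gtrsim\log\kappa$. Concretely: for the ideal space $\tilde V_H$ one has coercivity-up-to-compact-perturbation, hence a discrete inf--sup constant that is $\gtrsim 1$ for $H$ small enough (this is where the stability bound \cref{eq:stabhelmholtz} with exponent $n$ enters — it controls how fine $H$ must be, but here $H$ is already restricted by the resolution condition, so one instead tracks a factor $\kappa^n$). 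Then \cref{thm:locerr} — whose proof only used the ellipticity bounds on the \emph{real} part of $a$ and the inf--sup of $b$, both still available — gives $\|\nabla(\mathcal{R}-\mathcal{R}^\ell)v\|_\Omega\lesssim \ell^{(d-1)/2}\exp(-C_\mathrm{dec}\ell)\|\nabla\mathcal{R}v\|_\Omega$, and a Strang-type lemma transfers the inf--sup stability from $\tilde V_H$ to $\tilde V_H^\ell$ once $\kappa^n\ell^{(d-1)/2}\exp(-C_\mathrm{dec}\ell)$ is small, i.e. once $\ell\gtrsim\log\kappa$.

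With discrete stability in hand, a Strang lemma yields
\[
\|u-\tilde u_H^\ell\|_\kappa \lesssim \|u-\mathcal{R}u\|_\kappa + \|(\mathcal{R}-\mathcal{R}^\ell)u\|_\kappa,
\]
and the first term is $H^{1+s}|f|_{s,\TH}$ from the ideal estimate, while the second is bounded, using \cref{thm:locerr} together with the stability estimate $\|\nabla\mathcal{R}u\|_\Omega\lesssim\|u\|_\kappa\lesssim\kappa^n\|f\|_\Omega$, by $\kappa^n\ell^{(d-1)/2}\exp(-C_\mathrm{dec}\ell)\|f\|_\Omega$. Combining gives \cref{eq:errestpracH1helmholtz}. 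The main obstacle I anticipate is the rigorous transfer of discrete inf--sup stability from the ideal space $\tilde V_H$ to the localized space $\tilde V_H^\ell$: one must verify that the perturbation $\mathcal{R}-\mathcal{R}^\ell$ is small in the operator norm relevant for the full $\|\cdot\|_\kappa$ norm (not merely the $H^1$-seminorm), which requires controlling the $L^2$-part of the perturbation as well — this is handled by combining the $H^1$-localization bound with an Aubin--Nitsche duality step on $W$, exactly as in the elliptic $L^2$-estimate \cref{eq:errestpracL2}, so I would reuse the argument of \cite[Thm.~6.2]{Dong2023}. Since these arguments are by now standard in the LOD-for-Helmholtz literature \cite{Peterseim2016,Brown2017,Hauck2022}, the detailed verification can be deferred to those references.
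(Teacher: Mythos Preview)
Your proposal is correct and follows essentially the same approach as the paper: the paper's own proof merely states that it ``combines arguments from the LOD analysis for the elliptic model problem (see, e.g., \cref{thm:dec,thm:locerr}) with techniques specific to the Helmholtz setting (see, e.g.,~\cite{Peterseim2016,Hauck2022})'' and omits the details, which is precisely the route you outline --- coercivity on $W$ from \cref{helm:LemCoer}, the ideal estimate via the argument of \cref{thm:convergenceprot}, the localization bound from \cref{thm:locerr}, and a Schatz/Strang-type transfer of discrete inf--sup stability under $\ell\gtrsim\log\kappa$. If anything, you have supplied more detail than the paper itself.
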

\begin{proof}
The proof combines arguments from the LOD analysis for the elliptic model problem (see, e.g., \cref{thm:dec,thm:locerr}) with techniques specific to the Helmholtz setting (see, e.g.,~\cite{Peterseim2016,Hauck2022}). For brevity, the details are omitted.
\end{proof}

\begin{remark}[Analogies to $hp$-FEM]\label{rem:hpfem}
The assumptions on the discretization parameters required for the stability and quasi-optimality of the $hp$-FEM for Helmholtz problems (see, e.g., ~\cite{Melenk2011}) are notably similar to those in the above theorem. In the case of homogeneous coefficients (for an extension to piecewise smooth coefficients, see \cite{Bernkopf2024}), it was shown that a stable and quasi-optimal finite element approximation can be achieved if the polynomial degree satisfies \( p \approx \log(\kappa)\) and the resolution condition \( H \kappa  p^{-1} \lesssim 1 \) holds, along with a suitable refinement strategy near geometric singularities. This resolution condition is the same as in ~\cref{helm:LemCoer}  (up to a constant), and the role of the polynomial degree in the $hp$-FEM is similar to that of the localization parameter in the LOD; see~\cref{thm:helmh}.
\end{remark}

\subsection{Gross--Pitaevskii problem}
As another example, we consider the Gross--Pitaevskii problem, which arises in quantum physics as a model for quantum states of so-called Bose--Einstein condensates. The problem is posed on a convex Lipschitz domain~$\Omega$, with homogeneous boundary conditions imposed on its boundary. This corresponds to the choice $V \coloneqq H^1_0(\Omega)$. Note that due to the rapid decay of low-energy quantum states, the restriction to a sufficiently large domain, along with homogeneous Dirichlet boundary conditions, is a physically reasonable modeling assumption.
Stationary quantum states correspond to the critical points of the Gross--Pitaevskii energy functional, defined as
\begin{equation}
	\label{eq:energy}
	\mathcal{E}(v) \coloneqq \tfrac{1}{2} (\nabla v, \nabla v)_{\Omega} + \tfrac{1}{2} (\calV v, v)_{\Omega} + \tfrac{\kappa}{4} (|v|^2 v, v)_{\Omega}, \quad v \in V,
\end{equation}
subject to the \( L^2 \)-normalization constraint \( \|v\|_{\Omega} = 1 \). Here,  $\mathcal{V} \in L^\infty(\Omega)$ is a non-negative (possibly rough) trapping potential and the parameter $\kappa > 0$ characterizes the strength of repulsive interactions between particles.
Of particular physical interest is the ground state, which corresponds to the stationary quantum state of lowest energy, i.e., it solves the constrained minimization problem
\begin{equation}
	\label{eq:gs}
	u \in \argmin_{v \in V \with \|v\|_{\Omega} = 1} \mathcal{E}(v),
\end{equation}
and the minimal energy is denoted by $E \coloneqq \mathcal E(u)$.
The corresponding Euler--Lagrange equations imply that the ground state \( u \), together with an eigenvalue \( \lambda \in \mathbb{R} \), solves the nonlinear eigenvalue problem
\begin{equation}
	\label{eq:GPEweak}
	(\nabla u, \nabla v)_{\Omega} + (\calV u, v)_{\Omega} + \kappa (|u|^2 u, v)_{\Omega} = \lambda (u, v)_{\Omega} , \quad \forall v \in V,
\end{equation}
where $\lambda$ is referred to as the ground state eigenvalue.
In the above setting, it is a classical result that the ground state eigenvalue is the smallest eigenvalue among all eigenpairs of~\cref{eq:GPEweak} and it is simple. Moreover, the ground state is unique up to sign and can be chosen to be strictly positive in the interior of \( \Omega \), cf.~\cite{CanCM10}.

For the construction of the LOD method (we restrict ourselves to the prototypical method for simplicity; a localization can be performed analogously to \cref{sec:decay,sec:localization}), we consider only the terms on the left-hand side of~\cref{eq:GPEweak} which are linear in~$u$. The resulting bilinear form \( a \colon V \times V \to \mathbb{R} \) is defined as
\begin{equation}\label{eq:moda}
    a(w,v) \coloneqq (\nabla w, \nabla  v)_\Omega + (\calV w, v)_\Omega.
\end{equation}
The approximation space \( \tilde V_H \) of the prototypical LOD is then defined as in~\cref{eq:Zms}, using the modified bilinear form from~\eqref{eq:moda}. Depending on whether the space~\( M_H \), used in the definition of the fine-scale space \( W \) in \cref{eq:defW}, consists of globally continuous or $\TH$-piecewise polynomials, one obtains a CG- or DG-version of the LOD method, respectively.
The prototypical LOD approximation  is then defined as the solution to the finite-dimensional constrained minimization problem.
\begin{equation}
	\label{eq:gs_lod}
	\tilde u_H \in \argmin_{\tilde v_H \in \tilde V_H \with \|\tilde v_H\|_{\Omega} = 1} \mathcal{E}(\tilde v_H),
\end{equation}
and the corresponding minimal energy is denoted by $E_H \coloneqq \mathcal E(\tilde u_H)$. While the existence of such a minimizer follows from classical compactness arguments in the finite-dimensional setting, its uniqueness is generally not guaranteed. An overview of algorithms to practically solve  \cref{eq:gs_lod} (after a localization of the basis functions) can be found in \cite{HenJ25}.
The Euler--Lagrange equations corresponding to~\cref{eq:gs_lod} give rise to the following finite-dimensional nonlinear eigenvalue problem: seek \( \tilde u_H \in \tilde V_H \) and an associated eigenvalue \( \lambda_H \in \mathbb{R} \) such that
\begin{equation*}\label{eq:GPEweakdisc}
    (\nabla \tilde u_H, \nabla \tilde v_H)_\Omega + (\calV \tilde u_H, \tilde v_H)_\Omega + \kappa (|\tilde u_H|^2\tilde u_H,\tilde v_H)_\Omega =  \lambda_H(\tilde u_H,\tilde v_H)_\Omega ,\quad \forall \tilde v_H \in \tilde{V}_H.
\end{equation*}

As a preliminary step towards proving the convergence of the prototypical LOD method for the Gross--Pitaevskii problem, we first analyze the approximation properties of the space~\(\tilde V_H\), as done in the following lemma.

\begin{lemma}[Approximation properties of $\tilde V_H$]\label{lem:approxGPE}
    Assume that $u \in H^s(\Omega)$ as well as $|u|^2u \in H^s(\Omega)$ with \( s \in \{0, 1, \dots, p+1\} \), and let $\calP u \in \tilde V_H$ be the solution to
    \begin{equation}\label{eq:projtVH}
        a(\calP u, \tilde v_H) = - \kappa (|u|^2u,\tilde v_H)_\Omega - \lambda(u, \tilde v_H)_\Omega,\quad \forall \tilde v_H \in \tilde V_H.
    \end{equation}
    Then, we have 
    \begin{equation}
    	\label{eq:approxpropgpe}
        \| \nabla( u - \calP u) \|_\Omega \lesssim  H^{1+s} \big(| |u|^2u |_{s,\Omega} + |u|_{s,\Omega}\big).
    \end{equation}
\end{lemma}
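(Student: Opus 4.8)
The plan is to recognize $\calP u$ as the Ritz ($a$-orthogonal) projection of the ground state $u$ onto $\tilde V_H$ with respect to the bilinear form \eqref{eq:moda}, and then to reproduce, for that form, the higher-order LOD estimate behind \cref{thm:convergenceprot}; the zeroth-order term $(\calV\cdot,\cdot)_\Omega$ will play no essential role since $\calV\ge0$ and $\calV\in L^\infty$ make \eqref{eq:moda} an inner product on $V=H^1_0(\Omega)$ whose norm $\|\cdot\|_a$ is equivalent to $\|\nabla\cdot\|_\Omega$ (this also gives well-posedness of \eqref{eq:projtVH}). First I would test \eqref{eq:GPEweak} with an arbitrary $\tilde v_H\in\tilde V_H\subset V$ and move the cubic and eigenvalue terms to the right-hand side, obtaining $a(u,\tilde v_H)=\lambda(u,\tilde v_H)_\Omega-\kappa(|u|^2u,\tilde v_H)_\Omega$ with $a$ as in \eqref{eq:moda}; this is precisely the right-hand side of \eqref{eq:projtVH}, so $a(u-\calP u,\tilde v_H)=0$ for all $\tilde v_H\in\tilde V_H$. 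Since $\tilde V_H$ is the $a$-orthogonal complement of $W$ and $V=W\oplus\tilde V_H$ is the corresponding $a$-orthogonal splitting, this identifies $\calP u$ with the $a$-orthogonal projection of $u$ onto $\tilde V_H$ and $e\coloneqq u-\calP u$ with the $a$-orthogonal projection of $u$ onto $W$; in particular $e\in W$ and, using $a(\calP u,w)=0$ for all $w\in W$,
\[
\|e\|_a=\sup_{0\neq w\in W}\frac{a(e,w)}{\|w\|_a}=\sup_{0\neq w\in W}\frac{a(u,w)}{\|w\|_a}.
\]

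It remains to estimate this supremum. Testing \eqref{eq:GPEweak} with $w\in W$ gives $a(u,w)=(f,w)_\Omega$ for $f\coloneqq\lambda u-\kappa|u|^2u$, which belongs to $H^s(\Omega)$ by the hypotheses on $u$ and $|u|^2u$. Here the higher-order mechanism of the construction enters: by the very definition of $W$ in \eqref{eq:defW} we have $\Pi_H w=0$, where $\Pi_H\colon L^2(\Omega)\to M_H$ denotes the $L^2$-orthogonal projection (element-wise for the DG-variant, onto continuous $\TH$-piecewise polynomials for the CG-variant), so that $(f,w)_\Omega=(f-\Pi_H f,\,w-\Pi_H w)_\Omega$. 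Combining Cauchy--Schwarz with the approximation bounds $\|f-\Pi_H f\|_\Omega\lesssim H^{s}|f|_{s,\TH}$ (valid since $s\le p+1$) and $\|w-\Pi_H w\|_\Omega\lesssim H\|\nabla w\|_\Omega$ --- the latter being exactly the approximation property of $\Pi_H$ already used in \cref{helm:LemCoer} --- one gets $a(u,w)\lesssim H^{s+1}|f|_{s,\TH}\,\|\nabla w\|_\Omega\lesssim H^{s+1}|f|_{s,\TH}\,\|w\|_a$. Dividing by $\|w\|_a$ and taking the supremum yields $\|e\|_a\lesssim H^{s+1}|f|_{s,\TH}$; since $f\in H^s(\Omega)$ one has $|f|_{s,\TH}=|f|_{s,\Omega}\lesssim|u|_{s,\Omega}+| |u|^2u |_{s,\Omega}$ after absorbing the fixed constants $\lambda$ and $\kappa$, and \eqref{eq:approxpropgpe} follows from $\|\nabla e\|_\Omega\le\|e\|_a$.

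The only step I expect to require genuine care is the gain of the extra power of $H$ --- the ``$+1$'' in the exponent $s+1$: it does not come from elliptic regularity but solely from the double orthogonality $(f,w)_\Omega=(f-\Pi_H f,\,w-\Pi_H w)_\Omega$ for $w\in W$, which is exactly the design feature of the fine-scale space $W$. A secondary, easily overlooked point is that the possibly rough potential term $(\calV u,\cdot)$ must be kept inside $a$ and only $\lambda u$ and $\kappa|u|^2u$ transferred to the right-hand side --- otherwise one would need $\calV u\in H^s(\Omega)$, which cannot be expected for merely bounded $\calV$; this is precisely why the estimate is re-derived for the form \eqref{eq:moda} rather than quoted verbatim from \cref{thm:convergenceprot}. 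The remaining ingredients --- the $L^2$-projection error bounds and the norm equivalence induced by $\calV\in L^\infty$ --- are standard.
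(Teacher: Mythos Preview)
Your argument is correct and essentially identical to the paper's: both recognize $\calP u$ as the $a$-Ritz projection of $u$ onto $\tilde V_H$ (so that $e=u-\calP u\in W$), then use $a(u,w)=(f,w)_\Omega=(f-\Pi_H f,\,w-\Pi_H w)_\Omega$ with $f=\lambda u-\kappa|u|^2u$ together with the approximation properties of $\Pi_H$ to produce the factor $H^{s+1}$. The only cosmetic difference is that the paper works directly with $a(e,e)$ rather than your dual characterization $\|e\|_a=\sup_{w\in W}a(u,w)/\|w\|_a$; note also that for your Galerkin-orthogonality step --- and for the paper's own claim ``$u-\calP u\in W$ by construction'' --- the sign in front of $\lambda$ in \eqref{eq:projtVH} must be read as $+\lambda$, matching what you derived from \eqref{eq:GPEweak}.
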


\begin{proof}
By coercivity and symmetry of \(a\), and using~\eqref{eq:GPEweak} and~\eqref{eq:projtVH}, we obtain that
    \begin{align*}
        \|\nabla (u - \calP u)\|_\Omega^2 &\leq 
        a(u - \calP u, u) = - \kappa (|u|^2u,u - \calP u)_\Omega - \lambda(u, u - \calP u)_\Omega.
    \end{align*}
Since \( u - \calP u \in W \) by construction, and using the approximation properties of the \( L^2 \)-projection \( \Pi_H \colon L^2(\Omega) \to M_H \), it follows that
    \begin{align*}
        \|\nabla (u - \calP u)\|_\Omega^2 &\leq - \kappa (|u|^2u - \Pi_H(|u|^2u),(u - \calP u) - \Pi_H(u - \calP u))_\Omega \\&\qquad- \lambda(u - \Pi_Hu, (u - \calP u) - \Pi_H(u - \calP u))_\Omega \\
        & \lesssim H^{1+s} \big(| |u|^2u |_{s,\Omega} + |u|_{s,\Omega}\big) \|\nabla(u - \calP u)\|_\Omega,
    \end{align*}
    where the hidden constant depends on \( \lambda \). This yields \cref{eq:approxpropgpe}. 
\end{proof}

The following theorem proves the convergence of the prototypical LOD method.
\begin{theorem}[Prototypical method for the Gross--Pitaevskii problem]\label{thm:errgpe}
	
Assume that $u \in H^s(\Omega)$ and $|u|^2u \in H^s(\Omega)$ with \( s \in \{0, 1, \dots, p+1\} \). Then the discrete ground state and the corresponding energy fulfill
\begin{equation*}
	\|\nabla (u-\tilde u_H)\|_\Omega \lesssim  H^{1+s} \big(| |u|^2u |_{s,\Omega} + |u|_{s,\Omega}\big),\qquad |E-E_H| \lesssim \|\nabla (u-\tilde u_H)\|_\Omega^2.
\end{equation*}
\end{theorem}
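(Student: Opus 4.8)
The plan is to follow the classical strategy for proving convergence of Galerkin-type methods for the Gross--Pitaevskii ground state, combining the approximation result of \cref{lem:approxGPE} with an \emph{a priori} bound showing that the discrete ground state $\tilde u_H$ is close to the continuous one $u$ in the energy norm (and in $L^2$). First I would establish that the discrete minimal energy $E_H$ is close to $E$ from above by using $\calP u / \|\calP u\|_\Omega$ as a competitor in the discrete minimization problem \cref{eq:gs_lod}. Since $\|\nabla(u-\calP u)\|_\Omega \lesssim H^{1+s}(||u|^2u|_{s,\Omega} + |u|_{s,\Omega})$, and $\|u\|_\Omega = 1$, a short computation using the boundedness of $\mathcal E$ and the Sobolev embedding $H^1(\Omega)\hookrightarrow L^4(\Omega)$ (valid since $\Omega$ is a bounded Lipschitz domain in $d\in\{2,3\}$) shows that $\calP u$ is $L^2$-close to a normalized function and that $\mathcal E(\calP u/\|\calP u\|_\Omega) \le E + CH^{2+2s}$. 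Hence $E_H \le E + CH^{2+2s}$.

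Next, I would combine this energy bound with a lower bound. Using that $u$ is the unique ground state and is simple, one has a spectral gap: the second derivative of the energy functional at $u$, restricted to the tangent space of the constraint manifold, is coercive. More concretely, one can invoke the known result (cf.~\cite{CanCM10}) that there is a constant $c>0$ with $\mathcal E(v) - E \ge c\,\|\nabla(v - u\,\mathrm{sign}(\langle v,u\rangle))\|_\Omega^2$ for all normalized $v$ in an $H^1$-neighborhood of $u$, together with a Lipschitz/continuity argument ensuring $\tilde u_H$ (after a sign fix) lies in such a neighborhood for $H$ small. This turns the energy estimate $E_H - E \le CH^{2+2s}$ into $\|\nabla(u - \tilde u_H)\|_\Omega^2 \le C H^{2+2s}$, i.e.\ the first asserted estimate $\|\nabla(u - \tilde u_H)\|_\Omega \lesssim H^{1+s}(||u|^2u|_{s,\Omega} + |u|_{s,\Omega})$. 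Finally, the energy error bound $|E - E_H| \lesssim \|\nabla(u - \tilde u_H)\|_\Omega^2$ follows by a standard argument: the upper bound was already shown, and for the matching lower estimate one expands $\mathcal E(\tilde u_H) - \mathcal E(u)$ around $u$, using that the first-order term vanishes on the constraint manifold (Euler--Lagrange equation \cref{eq:GPEweak}) up to a correction controlled by the $L^2$-error, and the Aubin--Nitsche-type bound $\|u - \tilde u_H\|_\Omega \lesssim (H + \text{higher order})\|\nabla(u - \tilde u_H)\|_\Omega$, which is available analogously to \cref{eq:errestpracL2}.

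The main obstacle I expect is not the approximation estimate (which is handed to us by \cref{lem:approxGPE}) but the \emph{nonlinear stability step}: one must rigorously justify that $\tilde u_H$ is eventually in the basin of attraction where the spectral gap / local coercivity of $\mathcal E$ on the constraint manifold applies, and handle the sign ambiguity of the ground state. This requires a continuity argument showing $\|\nabla(u - \tilde u_H)\|_\Omega \to 0$ as $H\to 0$ \emph{before} one can bootstrap to the sharp rate — typically done by first proving a crude convergence $\tilde u_H \to u$ (e.g.\ via weak compactness of a minimizing sequence and the fact that the only normalized critical point of lowest energy is $u$), and only then invoking local coercivity. A secondary technical point is controlling the cubic nonlinearity consistently: every time the term $\kappa(|v|^2v,\cdot)$ appears one needs $L^4$-bounds and the elementary inequality $||a|^2a - |b|^2b| \lesssim (|a|^2 + |b|^2)|a-b|$, together with uniform $H^1$-bounds on $u$ and $\tilde u_H$ (the latter following from $E_H \le E + o(1)$ and the coercivity of the quadratic part of $\mathcal E$).
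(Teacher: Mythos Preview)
Your approach is correct and is essentially the same as the paper's. The paper's proof is a two-line affair: it invokes \cite[Thm.~1]{CanCM10} as a black box to obtain a quasi-best approximation property for $\tilde u_H$ in the $H^1$-norm together with the energy error estimate $|E-E_H|\lesssim\|\nabla(u-\tilde u_H)\|_\Omega^2$, and then inserts the approximation bound from \cref{lem:approxGPE}. What you have written is precisely a sketch of the argument behind that cited theorem (energy upper bound via a normalized competitor, conversion to an $H^1$-bound via the spectral gap/local coercivity of $\mathcal E$ at the simple ground state, and the quadratic energy error from a second-order expansion on the constraint manifold), so the two routes coincide in substance. One minor remark: for the bound $|E-E_H|\lesssim\|\nabla(u-\tilde u_H)\|_\Omega^2$ you do not actually need the Aubin--Nitsche-type $L^2$-estimate you mention; the first-order correction is $-\tfrac{\lambda}{2}\|u-\tilde u_H\|_\Omega^2$, which is already controlled by the $H^1$-error via Poincar\'e.
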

\begin{proof}
From the general convergence theory developed in~\cite[Thm.~1]{CanCM10}, we derive under the stated assumptions that the solution to~\eqref{eq:GPEweakdisc} fulfills a quasi-best approximation property in the $H^1(\Omega)$-norm and the stated energy error holds. The result then follows directly invoking \cref{lem:approxGPE}. 
\end{proof}

\begin{remark}[$L^2$- and eigenvalue approximation]
Note that $L^2$-error estimates for the ground state and the eigenvalue can be derived as well. Under suitable regularity assumptions on the dual problem defined in the proof of~\cite[Thm.~3]{CanCM10}, we obtain an additional order of convergence in the \(L^2(\Omega)\)-norm for the ground state approximation, compared to the \(H^1\)-estimate in~\cref{thm:errgpe}.
Under similar assumptions, the eigenvalue approximation exhibits the same convergence rate as the energy approximation.
A proof of these results is beyond the scope of this work; for details, see~\cite{HenP23}, which provides the proof for the CG-LOD with \( p = 1 \).

\end{remark}
\begin{remark}[Regularity of the solution]
	Under the assumptions made in this section, one can show that \( u \) and \( |u|^2u \) belong to \( H^2(\Omega) \), with corresponding norms bounded independently of the oscillations of \( \mathcal{V} \); see~\cite[Lem.~2.2]{HenP23}. Using similar arguments and assuming more regular $\calV$ and a smooth boundary, even higher regularity of the solution can be shown, e.g., \( u \in H^{p+1}(\Omega) \), \( |u|^2u \in H^{p+1}(\Omega) \) for some $p>1$. 	These considerations justify the assumptions in \cref{lem:approxGPE,thm:errgpe}. Note that, however, the seminorms of order greater than two may no longer be bounded independently of the oscillations of \( \mathcal{V} \). 
	The additional boundary regularity assumptions are not required if \( u \) is compactly supported in~\( \Omega \). While exact compact support may not occur in practice, the ground state typically exhibits a rapid decay (see, e.g.,~\cite[Thm.~2.5]{BaoC13}), which allows to relax the boundary regularity assumptions.
	Overall, under the same regularity assumptions, the LOD solution is expected to be two orders more accurate than a classical higher-order finite element method; see also the numerical investigation in~\cite{Doe25}.
\end{remark}

\section{Numerical experiments}\label{sec:numerics}

In all numerical experiments, we consider the domain \( \Omega = (0, 1)^2 \), unless stated otherwise. For implementation purposes, we use uniform Cartesian meshes \( \mathcal{T}_H \) composed of square elements of side length \( H \), and choose \( M_H \coloneqq \mathcal{Q}^p(\mathcal{T}_H) \) for the DG-LOD and \( M_H \coloneqq \mathcal{Q}^p(\mathcal{T}_H) \cap H^1(\Omega) \) for the CG-LOD, where \( \mathcal{Q}^p(\mathcal{T}_H) \) denotes the space of $\TH$-piecewise polynomials of coordinate degree at most \( p \); see also the footnote on \cpageref{page:footnote}. The local (infinite-dimensional) patch problems~\cref{eq:defKTell} are discretized using local submeshes of a fine Cartesian mesh \( \mathcal{T}_h \), with mesh size \( h < H \), fine enough to resolve all microscopic features of the coefficients. For this fine-scale discretization, we use the \( \mathcal{Q}^q \)-finite element method, with the polynomial degree \( q \in \mathbb{N} \) specified individually in the subsections below.
In the fully discrete convergence analysis, the space \( V \) is replaced by the fine-scale finite element space, and most arguments carry over directly; see, e.g., \cite[Ch.~4.4]{MalP20}. This yields an a priori error estimate for the fully discrete LOD approximation with respect to the fine-scale finite element solution, analogous to \cref{thm:convergenceloc}. An estimate with respect to the weak solution of the original PDE then follows by applying the triangle inequality and standard finite element approximation results.

The numerical experiments presented below can be reproduced using the code available at \url{https://github.com/moimmahauck/HO_LOD}.

\subsection{Heterogeneous elliptic problem}
\begin{figure}
	\begin{minipage}[c]{.32\linewidth}
\includegraphics[width=\linewidth]{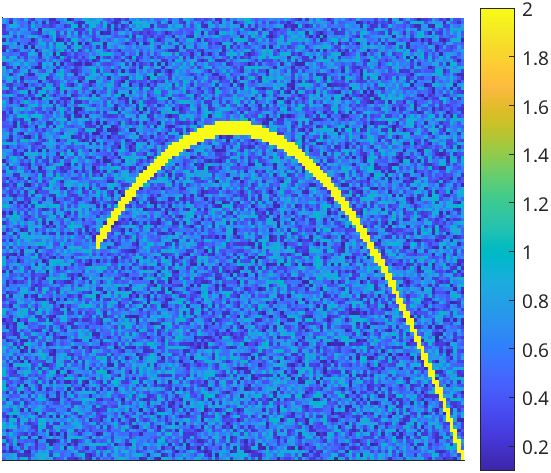}
	\end{minipage}
	\hfill
	\begin{minipage}[c]{.32\linewidth}
		\includegraphics[width=\linewidth]{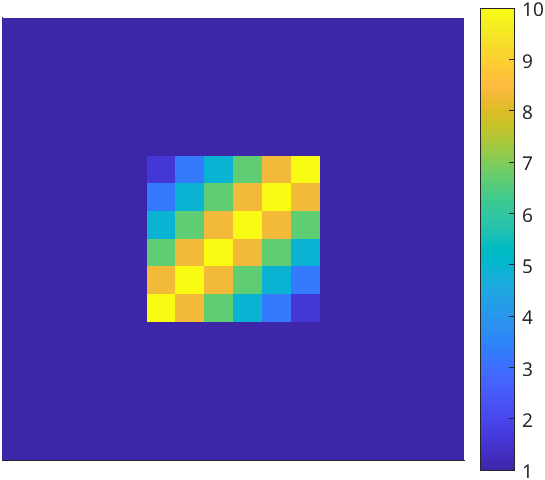}
	\end{minipage}
	\hfill
	\begin{minipage}[c]{.32\linewidth}
		\includegraphics[width=\linewidth]{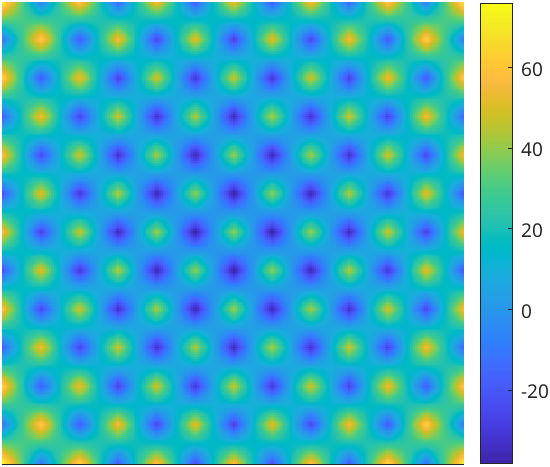}
	\end{minipage}		
	\caption{Coefficients $A_1$ and $A_2$ used in the first and second numerical experiments, respectively (left and center), and potential~$\calV$ used in the third numerical experiment (right).}
	\label{fig:ellipticcoefficient}
\end{figure}
As a first numerical experiment, we consider the elliptic model problem \cref{eq:weakform} with the coefficient $A_1$ shown in \cref{fig:ellipticcoefficient} (left). The coefficient is piecewise constant on a uniform Cartesian grid $\mathcal{T}_\epsilon$ with mesh size $\epsilon = 2^{-7}$. For all elements whose midpoints lie within a distance of $4\epsilon$ from a given parabola, the coefficient is set to a value of 2. In all other elements, the coefficient values are sampled independently from a uniform distribution on the interval $[0.1, 1]$.
We further consider the two smooth source terms
\begin{align*} 
f_1(x,y) = 2\pi^2\sin(\pi x)\sin(\pi y),\qquad f_2(x,y) =1. 
\end{align*} 
We emphasize that for the  source term $f_2$, as observed in \cref{thm:convergenceloc}, the first term on the right-hand side of error estimate \cref{eq:errestpracH1} vanishes. This implies that, for~$f_2$, only the second summand, i.e., the exponentially decaying localization error, remains.
We abbreviate the relative errors with respect to the energy norm as
\begin{equation*}
\mathrm{err}_a(H,\ell) \coloneqq \frac{\|u_h - u_{H,h}^\ell\|_a}{\|u_h\|_a},
\end{equation*}
where $u_{H,h}^\ell$ denotes the fine-scale LOD approximation, and $u_h$ is the fine-scale finite element solution as reference. The fine-scale discretization is performed using the $\mathcal{Q}^1$-finite element method on the fine mesh $\mathcal{T}_h$ with mesh size $h = 2^{-9}$, which is sufficiently fine to resolve the microscopic details of the coefficient $A$. Note that, due to the expected low regularity of the analytical solution, higher-order finite elements do not offer an advantage in terms of convergence rates at the fine scale.

\begin{figure}
	\includegraphics[height=.32\linewidth]{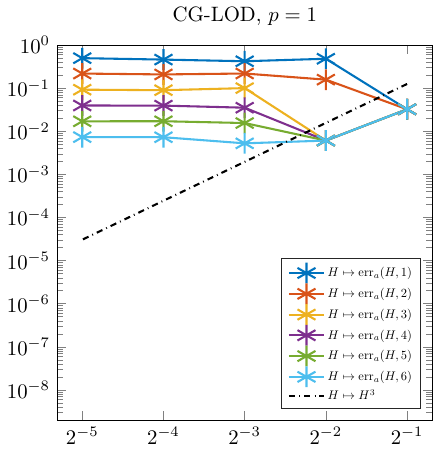}\hfill
	\includegraphics[height=.32\linewidth]{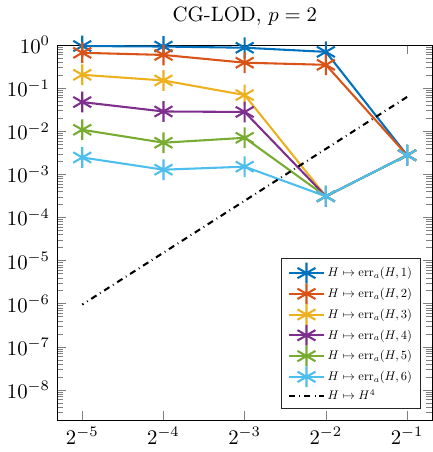}\hfill
	\includegraphics[height=.32\linewidth]{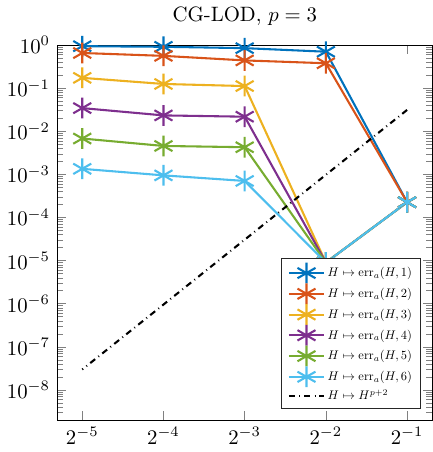}\\[2ex]
	\includegraphics[width=.32\linewidth]{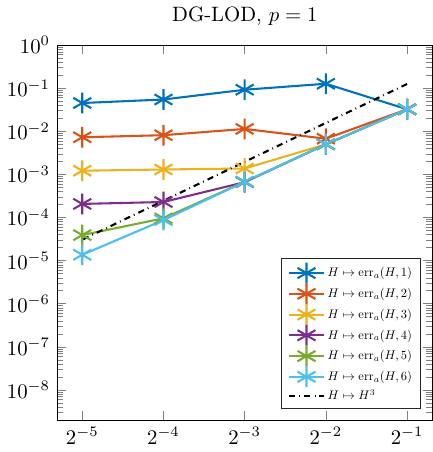}\hfill
	\includegraphics[width=.32\linewidth]{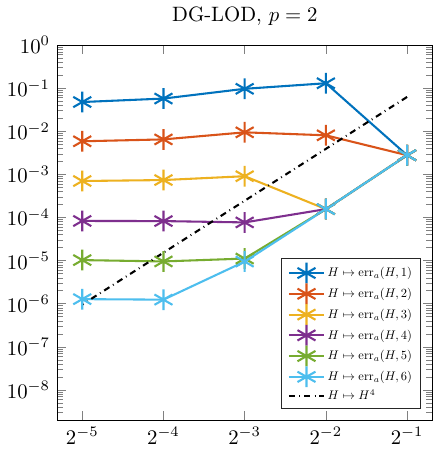}\hfill
	\includegraphics[width=.32\linewidth]{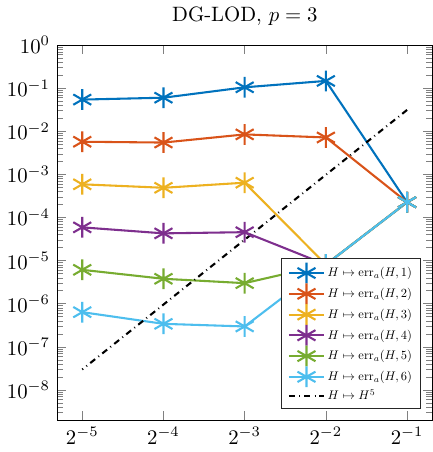}
	\caption{Error plots for the CG-LOD and DG-LOD methods for polynomial degrees $p \in \{1,2,3\}$ for the source term $f_1$. For fixed oversampling parameters $\ell$, the relative energy norm error is plotted as a function of coarse mesh size~$H$.}
	\label{fig:conv}
\end{figure}

In \cref{fig:conv}, the first row of plots illustrates the convergence behavior of the CG-LOD method. One can observe that the expected convergence rates can still be inferred from the first two data points, where all patches are global and the localization error is effectively zero. However, for mesh sizes $H$ beyond $2^{-2}$, the localization error completely dominates the convergence behavior.
Although this observation is consistent with the theoretical results in \cref{thm:convergenceloc}, this numerical experiment clearly shows the unsatisfactory localization properties of the CG-LOD.
The situation is quite different for the DG-LOD. The second row of plots in \cref{fig:conv} clearly shows a convergence of order $p+2$ for the LOD approximation as predicted by \cref{thm:convergenceloc}, provided that the oversampling parameter is sufficiently large. The noticeably larger gaps between the plateaus of the error curves for different oversampling parameters showcase the significantly improved localization properties of the DG-LOD compared to the CG-LOD. This improvement can be attributed to the use of the discontinuous Galerkin ansatz for the QOI.

\begin{figure}
	\includegraphics[height=.32\linewidth]{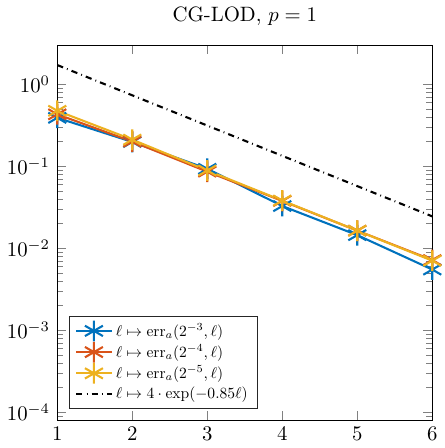}\hfill
	\includegraphics[height=.32\linewidth]{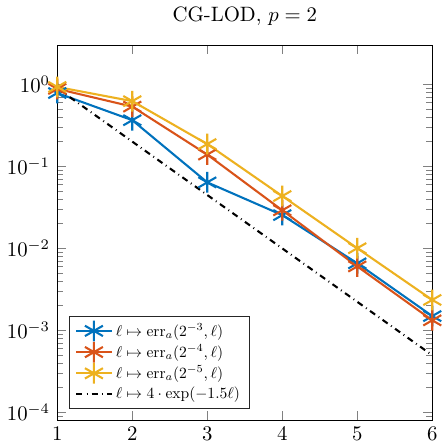}\hfill
	\includegraphics[height=.32\linewidth]{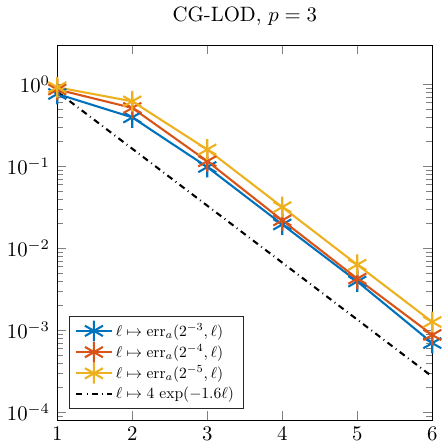}\\[2ex]
	\includegraphics[width=.32\linewidth]{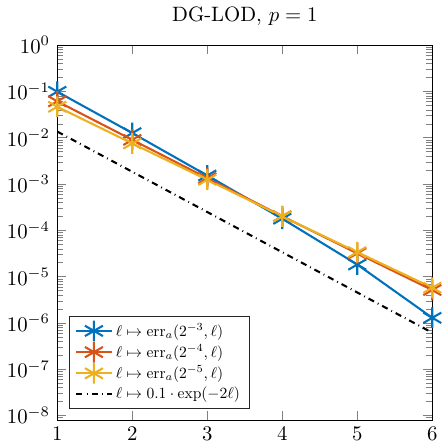}\hfill
	\includegraphics[width=.32\linewidth]{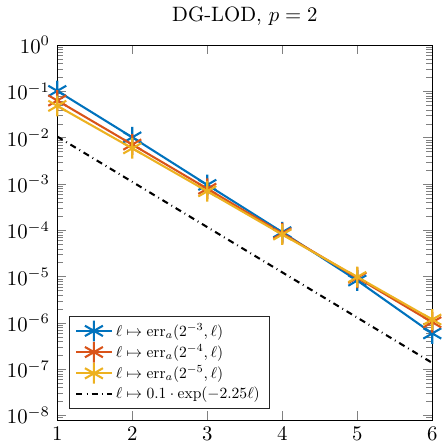}\hfill
	\includegraphics[width=.32\linewidth]{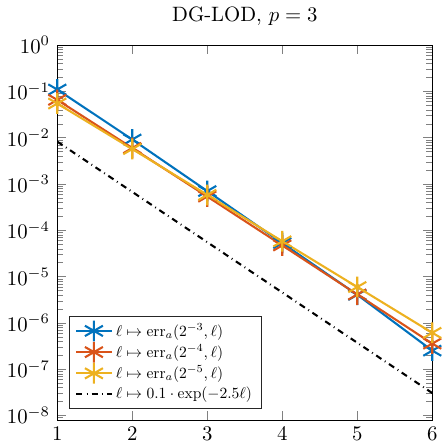}
	\caption{Error plots for the CG-LOD and DG-LOD methods for polynomial degrees $p \in \{1,2,3\}$ for the source term $f_2$. For fixed coarse mesh sizes $H$, the relative energy norm error is plotted as a function of the oversampling parameter $\ell$.}
	\label{fig:loc}
\end{figure}

In \cref{fig:loc} we compare the exponential decay of the localization error for the CG-LOD (first row) and the DG-LOD (second row). Recall that for the source term $f_2$ used in this experiment, the first term in the error estimate \cref{eq:errestpracH1} vanishes, leaving only the exponentially decaying localization error.
The numerical results in \cref{fig:loc} confirm the completely different localization behavior of the two methods. Note that, in general, increasing the polynomial degree leads to better localization, i.e., faster decay of the localization error. 

\subsection{High frequency heterogeneous Helmholtz problem}
\begin{figure}
	\begin{minipage}[b]{.32\linewidth}
	\includegraphics[width=\linewidth]{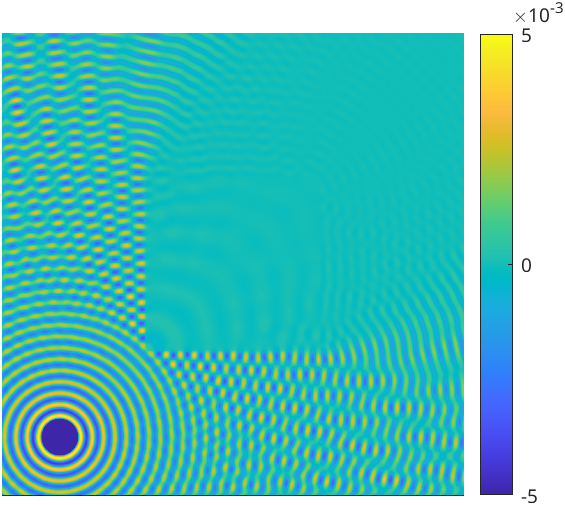}
	\end{minipage}
	\hspace{.5cm}
	\begin{minipage}[b]{.32\linewidth}
	\includegraphics[width=\linewidth]{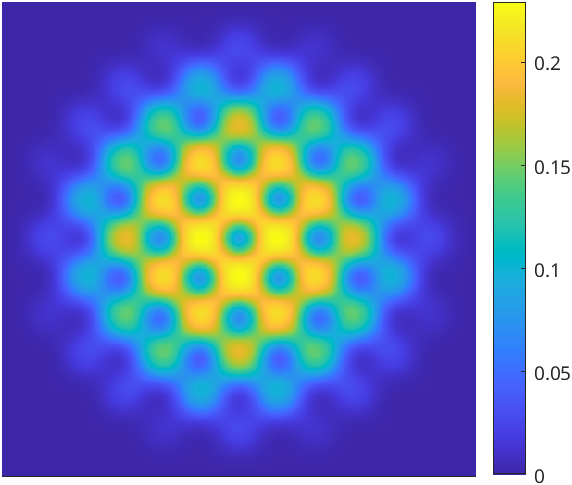}
	\end{minipage}
	\caption{Real part of the Helmholtz LOD solution (left), and LOD Gross--Pitaevskii ground state approximation (right)}
	\label{fig:helmholtz}
\end{figure}

In the second numerical experiment, we demonstrate the performance of the higher-order DG-LOD in the context of high-frequency heterogeneous Helmholtz problems. The coefficient~$A_2$ used in this experiment is shown in \cref{fig:helmholtz} (center), and the wave number is chosen as $\kappa = 2^8$. Furthermore, the coefficients $\calV$ and $\sigma$ are set to one.
As the source term $f_3$, we use an approximate point source located at~$(1/8, 1/8)$, which is supported within a circle of radius $1/20$, defined as
\begin{align*}
	f_3(x,y) = \begin{cases}
		10^4 \cdot  \exp\bigg(\frac{-1}{1-\tfrac{(x - 1/8)^2 + (y - 1/8)^2}{1/20^2}}\bigg)& \text{if }(x - 1/8)^2 + (y - 1/8)^2 < 1/20^2,\\
		0& \text{else.}
	\end{cases}
\end{align*} 
A high wavenumber, such as $\kappa = 2^8$, leads to a pronounced pollution effect when classical low-order finite elements are used; see \cite{Babuka1997}. As a result, obtaining a quasi-optimal approximation demands considerably more restrictive conditions on the fine-scale mesh size than are required merely to capture the oscillatory nature of the solution.
In this numerical experiment, we use the $\mathcal Q^2$-finite element method on the mesh $\Th$ with $h = 2^{-10}$ for the fine-scale discretization. This choice aligns with the mesh size condition $h \sim \kappa^{-5/4}$ for quadratic finite elements to achieve a quasi-optimal approximation; see \cite[Thm.~5.1]{Du2015} for the constant coefficient~case.

\cref{fig:helmholtz} (left) shows the real part of the LOD approximation for $p = 3$, $H = 2^{-6}$, and $\ell = 4$. The corresponding relative error in the norm $\|\cdot\|_\kappa$, defined in~\cref{eq:hhnorm}, against the fine-scale solution is $7.2970 \times 10^{-4}$. This result demonstrates that higher polynomial degrees on the coarse scale can further relax the resolution requirements, which is consistent with the theoretical result presented in \cref{helm:LemCoer}.

\subsection{Gross--Pitaevskii eigenvalue problem}

In the third numerical experiment, we apply the higher-order DG-LOD method to approximate the Gross--Pitaevskii ground state. We consider the domain \(\Omega = (-6,6)^2\), the particle interaction parameter \(\kappa = 100\), and choose the potential defined as
\begin{equation*}
\mathcal V(x) \coloneqq \tfrac12|x|^2 + 40\times \mathrm{tent}(x)\mathrm{tent}(y),
\end{equation*}
where \( \mathrm{tent} \) denotes the periodized version of the tent function on the interval \( [0,1] \), which attains the value one at $0.5$ and vanishes at $0$ and $1$. This potential is illustrated in \cref{fig:ellipticcoefficient} (right) and a corresponding ground state approximation is shown n \cref{fig:helmholtz} (right). Since \( \calV \in W^{1,\infty}(\Omega) \), the DG-LOD method with degree \( p = 2 \) achieves optimal order convergence. This is because the assumption in \cref{thm:errgpe}, that \( u \) and \( |u|^2 u \) belong to \( H^3(\Omega) \), is satisfied.

In the convergence plot shown in~\cref{fig:gpe2}, one observes the expected optimal rates of convergence,  for the ground state approximation in the \( L^2 \)- and \( H^1 \)-norms, as well as for the energy and eigenvalue approximations, provided the oversampling parameter is chosen sufficiently large. We emphasize that, initially, for very coarse mesh sizes \( H \), a reduced convergence order (e.g.,  around three for the $H^1$-error of the ground state approximation) is observed.  This behavior is based on the fact that seminorms of order greater than two for \( u \) and \( |u|^2u \) are not bounded independently of the oscillations in \( \calV \). As soon as these oscillations are sufficiently resolved, the theoretically predicted optimal convergence rates are attained. A similar behavior is observed for the \( L^2 \)-error, as well as for the energy and eigenvalue errors. Note that machine precision effects can be seen in the eigenvalue errors for small mesh~sizes.

\begin{figure}
	\includegraphics[width=.32\linewidth]{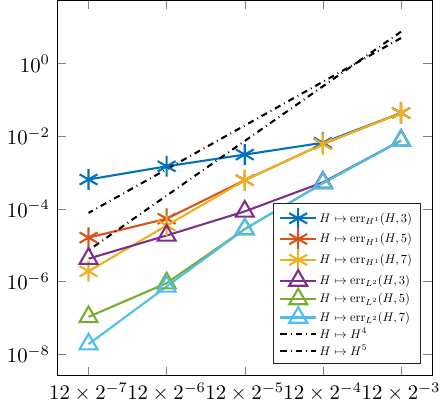}\hspace{.5cm}
	\includegraphics[width=.32\linewidth]{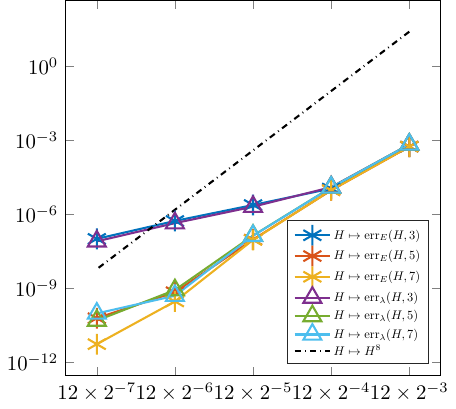}
	\caption{Error plots for the DG-LOD with $p = 2$ for the Gross-Pitaevskii problem. For fixed oversampling parameters \( \ell \), the \( L^2 \)- and \( H^1 \)-errors of the ground state approximation (left), as well as the errors in the energy and eigenvalue approximations (right), are plotted as functions of the coarse mesh size~\( H \).}
	\label{fig:gpe2}
\end{figure}

\section*{Acknowledgments}
M.~Hauck and R.~Maier acknowledge funding from the Deutsche Forschungsgemeinschaft (DFG, German Research Foundation) -- Project-ID 258734477 -- SFB 1173. Part of this research was performed while the authors were visiting the Institute for Mathematical and Statistical Innovation (IMSI), which is supported by the National Science Foundation (Grant No. DMS-1929348).

\bibliographystyle{alpha}
\bibliography{bib}
\end{document}